\numberwithin{equation}{section}
\definecolor{Mygrey}{gray}{0.75}
\def\displayandname#1{\rlap{$\displaystyle\csname #1\endcsname$}%
                      \qquad \texttt{\char92 #1}}
\def\url@leostyle{%
  \@ifundefined{selectfont}{\def\UrlFont{\sf}}{\def\UrlFont{\small\ttfamily}}}
\newtheorem{thm}{Theorem}[section]
\newtheorem{pro}[thm]{Proposition}
\newtheorem{lem}[thm]{Lemma}
\newtheorem{cla}[thm]{Claim}
\newtheorem{cor}[thm]{Corollary}
\theoremstyle{definition}
\newtheorem{df}[thm]{Definition}
\theoremstyle{remark}
\newtheorem{rem}[thm]{Remark}
\newtheorem{exa}[thm]{Example}
\begin{document}

\bibliographystyle{acm}

\title{FACTORIZATION OF TROPICAL  MATRICES}
  \author[Adi Niv]{ Adi Niv$^\dagger$ }

  \thanks{$^\dagger$ Department of Mathematics, Bar-Ilan University, Ramat Gan 52900, Israel.\newline
Email:  {\tt adi.niv@live.biu.ac.il}} 
\thanks{This paper is part of the author's PhD thesis, which was written at Bar-Ilan University under the supervision of Prof. L.\ H.\ Rowen.} 
\thanks{We thank the referee for helpful comments on the original version of this paper.}

\begin{abstract} In contrast to the situation in classical linear algebra,  not every tropically non-singular matrix can be factored into a product of tropical elementary matrices. We  do prove the factorizability of any tropically non-singular $2\times 2$ matrix and, relating to the existing Bruhat decomposition, determine which  $3\times 3$ matrices are factorizable. Nevertheless, there is a closure operation, obtained by means of the tropical adjoint, which is always factorizable, generalizing the decomposition of the closure operation~$*$ of a matrix.
\end{abstract}

\maketitle

\thispagestyle{myheadings}
\font\rms=cmr8
\font\its=cmti8
\font\bfs=cmbx8

\markright{}
\def\thepage{}

 \section{Introduction}

The tropical semifield is an ordered group $\mathcal{G}$ (usually the set of real numbers $\mathbb{R}$ or the set of rational numbers $\mathbb{Q}$),  together with $-\infty$, denoted as $\mathbb{T}=\mathcal{G}\bigcup\{-\infty\}$, and equipped with the operations $a\varoplus b=max\{a,b\}$ and $a\varotimes b=a+b$, denoted as $a+b$ and~$ab$ respectively (see ~\cite{MPA}, ~\cite{TAG}  and ~\cite{TIM}). This arithmetic enables one to simplify non-linear questions by answering them in a linear setting (see~\cite{PA}), which applied in discrete mathematics, optimization, algebraic geometry and more, as has been well reviewed in ~\cite{NWR}, ~\cite{LOP}, ~\cite{MAA}, ~\cite{S&A}, ~\cite{MPW}, ~\cite{IM} and ~\cite{TGA}. 

This structure can also be studied via the valuation over the field $K=\mathbb{C}\{\{t\}\}$ of Puiseux series to the ordered group $(\mathbb{Q},+,\geq )$, as has been done in ~\cite{TLI}, by looking at the dual structure~$trop(a)=-val(a)$ denoted as the tropicalization of $a\in K$. In order to make the connection between the results in the work of Buchholz in ~\cite{TLI} and the results in this paper we say~$trop(a+b)=max\{trop(a),trop(b)\}$. Then it is obvious that the tropical structure deals with the uncertainty of $a=b$ in the valuation, in the form of~$trop(a+a)=trop(a)$.

\vskip 0.25 truecm

In this paper we aspire to solve the tropical factorization problem raised in ~\cite{TLI}and ~\cite{CTF}, by passing to a wider structure called the supertropical semiring (see ~\cite{STA} and ~\cite{STLA}), equipped with the ghost ideal~$G=\mathcal{G}^{\nu}$. We denote  as~$R=T\bigcup G\bigcup \{-\infty \}$ the supertropical semiring, where $T=\mathcal{G}$, which contains the so called tangible elements of the structure and~$\forall a\in T, \ a^{\nu}\in G$ are the ghost elements of the structure, as defined in ~\cite{STA}. So $G$ inherits the order of $\mathcal{G}$. We distinguish between a maximal element $a$ that is being attained once, i.e.~$a\in T$ which is invertible, and a maximum that is being attained at least twice, i.e.~$a+a=a^{\nu}\in G$, which  is not invertible. 

The work in ~\cite{STMA}, ~\cite{STMA2} and ~\cite{STMA3} shows that even though the semiring of matrices over the supertropical semiring lacks negation, it satisfies many of the classical matrix theory properties when using the ghost ideal $G$. We say $a$ ghost surpasses $b$, denoted by $a\vDash_{gs} b$, if $a=b$ or~$a\in G$ and $a^{\nu}>b^{\nu}$. We say $a$ is $\nu$-equivalent to~$b$, denoted by~$a\cong_{\nu}b$, if~$a^{\nu}=b^{\nu}$. That is, in the tropical structure, $\nu$-equivalent means equal.   
\vskip 0.25 truecm

\begin{df}
We define a matrix~$A\in M_n(\mathbb{T})$ to be  \textbf{tropically singular} if there exist at least two different permutations that attain the maximum value in the determinant. Otherwise the matrix is \textbf{tropically non-singular}.

Consequently  a matrix~$A\in M_n(R)$ is singular if~$det(A) \in G\bigcup\{-\infty\}$and non-singular if~$det(A) \in T$. A matrix $A$ is \textbf{strictly singular} if $det(A)=-\infty$.  
\vskip 0.25 truecm

Notice that over the tropical semifield we cannot indicate if the matrix is tropically non-singular by the value of its determinant, which is always invertible over $\mathbb{T}\setminus\{-\infty\}$. Over the supertropical semiring however, a supertropically non-singular matrix will have an invertible determinant, while a supertropically singular matrix will have a non-invertible determinant. 

As the singularity definitions are identical over the tropical and supertropical structures, we will only indicate "non-singular" or "singular" and "over the structure $\mathbb{T}$" or "over the structure $R$" (which will effect the value and invertability of the determinant). When talking about non-singular matrices over the field $K$ we use "invertible".
\end{df}

Looking at the theorem of  tropical determinants (defined in ~\cite{TA} to be the usual permanent) over the supertropical semiring, which satisfies $det(AB)\vDash_{gs} det(A)det(B)$ (~\cite[Theorem 3.5]{STMA}), one might wonder why the product of two  non-singular matrices maybe  singular. In this work we attempt to understand the reason by investigating the elementary matrices as the \textgravedbl generators\textacutedbl of matrices, in analogy to the well known classical fact that over a field $K$, $GL_n(K)$ is generated by elementary matrices. 

This situation is subtler for matrix semirings over a tropical semiring, as shown in~\S 3 and ~\cite[Lemma 4.36]{TLI}. Whereas every~$2\times 2$ non-singular matrix is factorizable, this fails for~$3\times 3$ matrices. However, we salvage a positive result, described in Corollary~6.6,  by passing to~$adj(A)$. We show how this applies to the closure operation~$*$ in ~\cite{LDM}, by establishing that the power~$A^k$ of a matrix~$A\in SL_n(\mathbb{T})$, with~$1_{\mathbb{T}}$ on its diagonal, stabilizes at~$k=n-1$.

\pagenumbering{arabic}
\addtocounter{page}{1}
\markboth{\SMALL ADI NIV }{\SMALL FACTORIZATION OF TROPICAL MATRICES}

\section{preliminaries}

\noindent In this section we establish some fundamental definitions for our work as well as give a glance for the Bruhat decomposition found in ~\cite{TLI}.

\begin{df}
Let $\mathbb{T}^n$ be the free module of rank $n$ over the tropical  semifield, and $R^n$ be the free module of rank $n$ over the supertropical  semiring. We define the \textbf{standard base} of $\mathbb{T}^n$, and therefore of $R^n$, to be $e_1,...,e_n$ where 

$e_i=
\begin{cases}
1_{\mathbb{T}}=1_R,\ \text{in the}\ i^{th}\ \text{coordinate}\\
0_{\mathbb{T}}=0_R,\ \text{otherwise} 
\end{cases}$.
\end{df}

\begin{df} The tropical \textbf{identity matrix} in the tropical matrix semiring is 

\noindent the~$n\times n$ matrix with the standard base for its columns. We denote this matrix as $$I_{\mathbb{T}}=I_R=I.$$
\end{df}

\begin{df}
A matrix $A\in M_n(R)$ is \textbf{tropically invertible} if there exists a matrix~$B\in M_n(R)$ such that $$AB=BA=I.$$
\end{df}

\begin{df}
Corresponding to the three elementary row matrix operations, we  define respectively three types of tropical \textbf{elementary matrices} obtained by applying one such operation to the identity matrix.
We denote these matrices as follows:
\vskip 0.25 truecm
$$E_{i,j}=(a_{t,l}),\ \text{where }a_{t,l}=
\begin{cases}
1_R,\ \text{where}\ t=l\ne i,j\\ 
1_R,\ \text{where}\ i=t\ne l=j\ or\ j=t\ne l=i\\
0_R,\ \text{otherwise}
\end{cases}$$
\vskip 0.25 truecm
\noindent which means switching the $i^{th}$ and $j^{th}$ rows.

\vskip 0.5 truecm

$$E_{k\cdot (i^{th} row)}=(b_{t,l}),\ \text{where }b_{t,l}=
\begin{cases}
1_R,\ \text{where}\ t=l\ne i\\
k,\ \text{where}\ t=l=i\\
0_R, \text{where}\ t\ne l
\end{cases}$$
\vskip 0.25 truecm
\noindent which means multiplying the $i^{th}$ row by an invertible $k \in T$.

\vskip 0.5 truecm

$$E_{i+k\cdot(j^{th}row)}=(c_{t,l}),\ \text{where }c_{t,l}=
\begin{cases}
1_R,\ \text{where}\ t=l\\
k,\ \text{where}\ i=t\ne l=j\\
0_R,\ \text{otherwise}
\end{cases}$$
\vskip 0.25 truecm
\noindent which means adding the $j^{th}$ row, multiplied by $k$, to the $i^{th}$ row, where $k \in T$. (We can define these matrices for $k\in R\setminus \{0_R\}$, but since applying $E_{i+k\cdot(j^{th}row)}$ for some $k\in G$ would be the same as applying $E_{i+a\cdot(j^{th}row)}$ twice for some $a\in T$ such that~$a^\nu =k$, we can reduce our set of elementary matrices to these definitions).
\vskip 0.25 truecm
\noindent We refer to the matrices $E_{i,j}$ as elementary matrices of type 1, to the matrices~$E_{k\cdot(i^{th}row)}$ as elementary matrices~of type 2, and to the matrices $E_{i+k\cdot(j^{th}row)}$ as elementary matrices~of
type 3.
\end{df}

\begin{df} A tropically \textbf{factorizable matrix} is defined to be a matrix that can be written as a product of tropical elementary matrices.
\end{df}


\begin{df} A square matrix $P_{\pi}=(a_{i,j})$ is defined to be a \textbf{permutation matrix} if there exists $\pi\in S_n$ such that 
$a_{i,j}
\begin{cases}
=0_R, j\ne \pi(i)\\
= 1_R, j=\pi(i)
\end{cases}.$
\end{df} That is, a permutation matrix is a product of elementary matrices of type 1.

By observing the matrices of type 2 as the generators of the diagonal matrices we give the following remark.

\begin{rem}

$ $

\noindent a. A tropical matrix $A$ is invertible if and only if it is a product of elementary matrices of type 1 and 2. That is, a product of a permutation matrix $P_{\pi}$ and a diagonal matrix~$D$, denoted as $D_{\pi}$.
\vskip 0.25 truecm
\noindent b. Non-singular triangular matrices over $R$ and  not strictly singular triangular matrices over~$\mathbb{T}$ are  factorizable.
\end{rem}

\begin{proof} 

$ $

\noindent a. See ~\cite[Proposition 3.9]{STMA}. We unite the products of matrices of type 1 and 2 under the definition of invertible matrices.
\vskip 0.25 truecm

\noindent b. First we can normalize the diagonal to $1_R$, using elementary matrices of type 2. Then, an upper triangular matrix will be obtained by applying 

$$E_{i+a_{i,j}row-j}\ \forall j>i=1,...,n-1 \text{ in this order},$$

\noindent creating one row after another. A lower triangular matrix will be obtained analogously by applying the same elementary operations for $ j<i=2,...,n \text{ in opposite order}.$

\end{proof}

Calculating the determinants of the elementary matrices, one can easily conclude that the product of elementary matrices might yield a singular matrix only when there is an elementary matrix~of type 3 involved in the product. This means that inequality in the rule of determinants arises from elementary matrices~of type~3. However, is it  possible to generate any matrix as a product of elementary matrices? This question is strictly related to the question raised by Buchholz in ~\cite{TLI}: $$\text{Does }trop(AB)=trop(A)trop(B)\text{, where }A,B\text{ are square matrices over the field }K?$$ Meaning, by considering that over a field we are able to factor an invertible matrix, and that the tropicalization of triangular matrices are tropical  triangular matrices,  does the factorization of the tropicalization of a matrix coincides with the tropicalization of the matrix factorization?

In his work, Buchholz states sufficient conditions for a positive answer, by means of  the lowest power of $t$ in $\mathbb{C}\{\{t\}\}$, which will be presented next. In \S 4 we will establish terms for factorizability of $3\times 3$ matrices and show how they relate to Buchholz's conditions. In \S 5 and \S 6 we salvage a positive answer for non-singular matrices over $R$, introducing a closure operation in the supertropical structure. The algorithm, however, also applies to the tropical structure. Moreover, it holds for singular matrices over $\mathbb{T}$, with determinant different than $-\infty$,  as well.

\begin{df}
A \textbf{track of a permutation $\pi\in S_n$} is the sequence $a_{1,\pi(1)}a_{2,\pi(2)}\cdots a_{n,\pi(n)}$ of $n$ entries of the matrix $A=(a_{i,j})\in M_n(R)$.
\end{df}

Let us begin with a motivating example, establishing that a non-singular tropical~$2\times 2$ matrix is always factorizable, determining which  tropical $2\times 2$  matrix has a Bruhat decomposition induced by the decomposition over $K$ and when do the two decompositions coincides.

\begin{exa}

$ $

\noindent a. Let $A$ be a $2\times 2$ invertible matrix over the field $K$. We denote the Bruhat factorization of~$A$ by $PLU$, where $P$ is a product of a diagonal matrix and a permutation matrix, $L$ is a lower unitriangular matrix and~$U$ is an upper unitriangular matrix. 
Then:
\vskip 0.25 truecm

(i) $trop(A)$ is  not strictly singular,
\vskip 0.25 truecm

(ii) $trop(A)=trop(P)trop(L)trop(U)$, when $trop(A)$ is non-singular over $R$,
\vskip 0.25 truecm

(iii) $trop(A)=trop(P)trop(L)trop(U)$, when $trop(A)$ is not strictly singular over $\mathbb{T}$,  

if and only if $trop(det(A))=det(trop(A))$. 
\vskip 0.25 truecm

\noindent (Notice that the determinant on the right hand side is defined to be the permanent)

\vskip 0.25 truecm
\noindent b. Let $B$ be a $2\times 2$ non-singular matrix over $R$, or a $2\times 2$ not strictly singular matrix over~$\mathbb{T}$. Then $B$ is factorizable.

\vskip 0.25 truecm
\noindent c. Let $B$ be a $2\times 2$ non-singular matrix over R and let $A$ be a $2\times 2$ matrix over $K$ such that $trop(A)=B$. Then $A$ is  invertible  and the factorization of $A$ is $PLU$, where~$trop(P)trop(L)trop(U)$  is the factorization of $B$.

\noindent (Meaning, over $\mathbb{T}$, the factorization exists, but might be different than the one being induced by the classical factorization).
\begin{proof}

$ $

\noindent a. $A$ is  invertible. Therefore $|A|\ne 0_K$ and $det(trop(A))\ne 0_R$. If $trop(A)$ is non-singular over~$R$ then one permutation track in $trop(A)$ is strictly bigger than the other. 

In the general  not strictly singular case, we write
$$A=
\left(
\begin{array}{cc}
a_{1,1} & a_{1,2}\\
a_{2,1}  & a_{2,2} 
\end{array}
\right)=P
\left(
\begin{array}{cc}
1_K & a\\
b  &1_K 
\end{array}
\right),$$ where $P$ will relocate and normalize  the tropicalization-source of the dominant permutation track to the diagonal, i.e., $trop(1_K)\geq trop(ab)$. We denote 
$$\bar{A}=\left(
\begin{array}{cc}
1_K & a\\
b  &1_K 
\end{array}
\right).$$

Next, we factor $A$ into $P,L$ and $U$ as follows:
$$A=P
\left(
\begin{array}{cc}
1_K & a\\
b  &1_K 
\end{array}
\right)=P
\left(
\begin{array}{cc}
1_K &0_K\\
b  &1_K-ba 
\end{array}
\right)\left(
\begin{array}{cc}
1_K & a\\
0_K  &1_K 
\end{array}
\right)$$

\noindent and 
$$trop(P)=trop\left(
\begin{cases}
a_{i,j}, j=\pi(i)\\
0_K, otherwise
\end{cases}\right)=
\left(\begin{cases}
trop(a_{i,j}), j=\pi(i)\\
trop(0_K)=-\infty, otherwise
\end{cases}\right)=D_{\pi}$$ (a tropical invertible matrix)
will yield 
\begin{equation}trop(A)=trop(P)trop\left(\left(
\begin{array}{cc}
1_K & a\\
b  &1_K 
\end{array}
\right)\right)=D_{\pi}
\left(
\begin{array}{cc}
0 & trop(a)\\
trop(b)  & 0
\end{array}
\right).\end{equation} Then, since 

1. $trop(det(A))=det(trop(A))$, which is required in case $trop(1_K)=trop(ab)$.

2. $det(A)=det(P)det(\bar{A})$ and 

3. $det(trop(A))=det(D_{\pi})det(trop(\bar{A})$, 

\noindent we get
\begin{equation}D_{\pi}
\left(
\begin{array}{cc}
0 & trop(a)\\
trop(b)  & 0
\end{array}
\right)=D_{\pi}
\left(
\begin{array}{cc}
0 & -\infty\\
trop(b)  & 0
\end{array}
\right)\left(
\begin{array}{cc}
0 & trop(a)\\
-\infty  & 0
\end{array}
\right)\end{equation}\vskip 0.10 truecm
 $$=
D_{\pi}
\left(
\begin{array}{cc}
0 & -\infty\\
trop(b)  & 0+ab
\end{array}
\right)\left(
\begin{array}{cc}
0 & trop(a)\\
-\infty  & 0
\end{array}
\right)=
D_{\pi}
\left(
\begin{array}{cc}
0 & -\infty\\
trop(b)  & det(trop(\bar{A}))
\end{array}
\right)\left(
\begin{array}{cc}
0 & trop(a)\\
-\infty  & 0
\end{array}
\right)$$\vskip 0.10 truecm
 $$
=D_{\pi}
\left(
\begin{array}{cc}
0 & -\infty\\
trop(b)  & trop(det(\bar{A}))
\end{array}
\right)\left(
\begin{array}{cc}
0 & trop(a)\\
-\infty  & 0
\end{array}
\right)
=trop(P)
trop(L)trop(U)
.$$
\vskip 0.25 truecm

If $trop(A)$ is non-singular over~$R$ then the requirement  $trop(det(A))=det(trop(A))$ is not necessary since the non-singularity of $trop(A)$ will apply; $$trop(1_R)\ne trop(ab)\Rightarrow 1_K\ne ab$$ so  equality must hold between $trop(det(A))$ and $det(trop(A))$. 
\vskip 0.25 truecm

\noindent b. Let $B=(\alpha_{i,j})$. By the same algorithm as in (2.2), we can factor any non-singular matrix over~$R$ and  not strictly singular matrix over $\mathbb{T}$: $P$ will relocate and normalize a dominant monomial of the determinant to the diagonal, and  
$\left(
\begin{array}{cc}
0 & -\infty\\
\beta  & 0
\end{array}
\right)\left(
\begin{array}{cc}
0 & \alpha\\
-\infty  & 0
\end{array}
\right)$ will yield the off diagonal part.

 If $B$ is non-singular over $R$, then $0$  strictly surpasses $\alpha\beta$ and $\alpha\beta+0=0$. If $B$ is  not strictly singular over $\mathbb{T}$ then  $\alpha\beta$ might  equal  $0$, and yet $\alpha\beta+0=0$.

\vskip 0.25 truecm

\noindent c. $B$ is non-singular over $R$, and therefore $det(B)\in T$. Thus  $1_R\ne \alpha\beta$, which means the terms of lowest power of $t$ do not cancel before applying the valuation $trop$. Therefore $1_K\ne trop^{-1}(\alpha)trop^{-1}(\beta)$ and $A$ is invertible. From (2.1) we can conclude that~$trop(P),\ trop(L)$ and $trop(U)$ are the tropicalizations of a product of a diagonal matrix and a permutation matrix, a lower unitriangular matrix and an upper unitriangular matrix, respectively, such that $PLU$ is the factorization of $A$.
\end{proof}

\end{exa}

\section{Nonfactorizable matrices}

\vskip 0.25 truecm

It is important to pay attention to the difference of the factorization process in the post-valuation case and in the pre-valuation case. In matrix theory over a field, the factorization of a matrix is achieved by applying elementary row operations to the matrix in order to transform it to the identity matrix (a process known as Gaussian elimination or reduction of the matrix); then, multiplying the inverses to the corresponding elementary matrices in the opposite order would yield our matrix. In matrix theory over a semifield without  negation, we cannot reduce a nonzero element to zero using elementary operations. Therefore, we are approaching this construction by applying elementary row operations to the identity matrix in order to transform it to our matrix (an expansion of the matrix instead of reduction of the matrix).

\vskip 0.7 truecm

\begin{cla}
For every  elementary matrix $E_1$ of type~1 or~2 and  elementary matrix~$E_2$ of type~3 there exist an elementary matrix~$E_4$ of type~1 or~2 respectively, and an elementary matrix~$E_3$ of type~3, such that~$E_1E_2=E_3E_4$.
\end{cla}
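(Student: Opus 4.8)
The plan is to treat this as a direct computation with the three families of elementary matrices, organized by cases according to the type of $E_1$. The underlying idea is that commuting a row-multiplication or row-swap past a "add a multiple of row $j$ to row $i$" operation just rescales or relabels the multiplier $k$, and produces the \emph{same} shear in a possibly different slot. Since these are elementary matrices acting on the left (row operations applied to $I$), the identity $E_1E_2=E_3E_4$ amounts to checking that the composite row operation "first do the type-3 operation, then do $E_1$'s operation" can be rewritten as "first do a type-3 operation $E_3$, then do an operation $E_4$ of the same kind as $E_1$."

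First I would handle the case $E_1=E_{m\cdot(\ell^{th}\text{row})}$ a type-2 matrix, and $E_2=E_{i+k\cdot(j^{th}\text{row})}$. There are two sub-cases. If $\ell\neq i$, then scaling row $\ell$ commutes on the nose with adding $k\cdot(\text{row }j)$ to row $i$ unless $\ell=j$; when $\ell=j$ one checks $E_1E_2=E_{i+(mk)\cdot(j^{th}\text{row})}\,E_1$, i.e. $E_3=E_{i+(mk)\cdot(j^{th}\text{row})}$ and $E_4=E_1$, legitimate since $mk\in T$ because $m,k\in T$. If $\ell=i$, one computes $E_1E_2=E_{i+(m k')\cdots}$? — more carefully, scaling row $i$ by $m$ after adding $k\cdot(\text{row }j)$ equals first adding $(m k)\cdot(\text{row }j)$? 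No: the correct rearrangement is $E_{m\cdot(i)}E_{i+k\cdot(j)}=E_{i+(mk)\cdot(j)}E_{m\cdot(i)}$ when one is careful about which side the scaling lands, and again $mk\in T$. I would simply verify the $(t,l)$-entries of both sides agree, which is the routine part.

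Next, the case $E_1=E_{p,q}$ a type-1 (transposition) matrix with $E_2=E_{i+k\cdot(j^{th}\text{row})}$. Conjugating a shear by a transposition relabels the indices: $E_{p,q}E_{i+k\cdot(j)}=E_{\sigma(i)+k\cdot(\sigma(j)^{th}\text{row})}E_{p,q}$ where $\sigma=(p\ q)$, so $E_3=E_{\sigma(i)+k\cdot(\sigma(j))}$ (still a legitimate type-3 matrix, with the \emph{same} $k\in T$) and $E_4=E_{p,q}=E_1$. Again this reduces to comparing entries, using that $\{i,j\}$ are distinct and tracking how $(p\ q)$ moves them; the only mild annoyance is bookkeeping when $\{p,q\}\cap\{i,j\}$ is a singleton versus empty versus equal.

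The main obstacle — such as it is — is not conceptual but organizational: there are several index-overlap sub-cases in each of the two main cases, and one must make sure that in every sub-case the new multiplier produced is still \emph{tangible} (lies in $T$, not merely in $R\setminus\{0_R\}$), so that $E_3$ is an elementary matrix of type 3 in the restricted sense fixed in Definition~2.4. This is automatic here because the only arithmetic performed on the multiplier is multiplication by the tangible scalar $m$ (type-2 case) or no arithmetic at all (type-1 case), and $T$ is closed under the tropical product. I would close the proof by remarking that, by symmetry, the dual statement ("for every $E_1$ of type 3 and $E_2$ of type 1 or 2 there exist $E_3$ of type 1 or 2 and $E_4$ of type 3 with $E_1E_2=E_3E_4$") follows by reading the computed identities backwards, so that any product of elementary matrices can be rewritten with all type-3 factors collected on one side.
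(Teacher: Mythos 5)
Your approach is the same as the paper's: organize by the type of $E_1$, split into index-overlap sub-cases, and verify in each that the type-3 factor slides to the left with its multiplier adjusted but still tangible. The uniform formulation of the type-1 case via $\sigma=(p\ q)$ is actually a bit cleaner than the paper's four-way case split, and the observation that tangibility is preserved because $T=\mathcal{G}$ is closed under the group operations is exactly the right justification.

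However, there is a concrete computational error in the type-2 sub-case $\ell=j$ (i.e.\ $E_1=E_{m\cdot(j^{th}\mathrm{row})}$, $E_2=E_{i+k\cdot(j^{th}\mathrm{row})}$). You wrote $E_1E_2=E_{i+(mk)\cdot(j^{th}\mathrm{row})}E_1$, but that does not check out: applying the left side to a matrix $M$ leaves row $i$ as $M_i+kM_j$ and scales row $j$ to $mM_j$, whereas the right side first scales row $j$ to $mM_j$ and then adds $mk$ times the \emph{scaled} row $j$ to row $i$, giving $M_i+m^2kM_j$. The correct new multiplier is $k/m$ (tropically $k-m$), matching the paper's $E_{u+\frac{k}{h}\cdot(i^{th}\mathrm{row})}$ in its notation. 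This is still tangible, so the conclusion of the claim is unaffected, but the displayed identity needs to be corrected. (You may also want to note, as the paper implicitly does, that $E_3$ and $E_4$ need not be literal elementary matrices in the degenerate cases: when $m=1_R$ or $k=0_R$ the factor degenerates to $I$, which is harmless since the claim is only used to push type-3 factors to one end of a product.)
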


\begin{proof}

This property is well known. We provide the proof here for the reader's convenience.

\noindent Let $E_2=E_{u+k\cdot(m^{th}row)}$ be an elementary matrix of type 3.

\noindent If $E_1=E_{i,j}$ is an elementary matrix~of type 1, then

$$E_1E_2=
\begin{cases}
E_2E_1,\ \text{where}\ u,m\ne i,j\\
E_{j+k\cdot(i^{th}row)}E_{i,j},\ \text{where}\ u=i\ and\ m=j\\
E_{j+k\cdot(m^{th}row)}E_{i,j},\ \text{where}\ u=i\ and\ m\ne j\\
E_{u+k\cdot(i^{th}row)}E_{i,j},\ \text{where}\ u\ne i\ and\ m=j
\end{cases}$$

\vskip 0.25 truecm

\noindent If $E_1=E_{h\cdot(i^{th}row)}$ is an elementary matrix~of type 2, then 

$$E_1E_2=
\begin{cases}
E_2E_1,\ \text{where}\ u,m\ne i\\
E_{i+kh\cdot(m^{th}row)}E_{h\cdot(i^{th}row)},\ \text{where}\ u=i\\
E_{u+\frac{k}{h}\cdot(i^{th}row)}E_{h\cdot(i^{th}row)},\ \text{where}\ m=i
\end{cases}$$

\end{proof}

\noindent Therefore, by symmetry of the last claim, once  a factorization has been obtained, one may construct a factorization whose elementary matrices~of type 3  appear at its ends.  Considering that we are constructing a matrix by applying elementary \textbf{row} operations to the identity matrix, we will be interested throughout the paper in the factorization  whose elementary matrices~of type 3  appear at its left end.

\vskip 0.25 truecm

\vskip 0.25 truecm

\noindent In the next proposition we prove that not every  not strictly singular matrix is factorizable. 

\vskip 0.7 truecm

\begin{pro}
Let $\pi$ and $\sigma$ be two different permutations in $S_n$ such that there exists $t\in \begin{cases}\mathbb{Z}_n,\ where\  n\  is\ odd\\ \mathbb{Z}_n\backslash\{\frac{n}{2}\},\ where\ n\ is\ even\end{cases}$, so that $\pi(i)=\sigma(i)+t\pmod n\ \forall i$ 
\vskip 0.25 truecm

\noindent (i.e. $\pi$ is a shift of $\sigma$, but not by $0$ or $\frac{n}{2}$).
\vskip 0.25 truecm

For $n>2$, any $n\times n$ matrix $A=(a_{i,j})=D_{\pi}+D_{\sigma}$, where $D_{\pi},D_{\sigma}$ are invertible matrices comprised of non-zero permutation tracks $\pi$ and $\sigma$ respectively,  is not factorizable.
\end{pro}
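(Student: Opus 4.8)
The plan is to argue by contradiction: suppose $A=D_\pi+D_\sigma$ admits a factorization into elementary matrices, and by Claim~3.1 (and the remark following it) we may assume this factorization has the form $A=E_3^{(1)}\cdots E_3^{(r)}\,Q$, where each $E_3^{(s)}$ is of type~3 and $Q$ is invertible, i.e.\ $Q=D_\tau$ for some permutation $\tau$ and diagonal $D$. Multiplying on the right by $Q^{-1}$, it suffices to show that no matrix of the form $A Q^{-1}$, which is again of the shape $D_{\pi'}+D_{\sigma'}$ with $\pi',\sigma'$ still a nontrivial, non-$\tfrac n2$ shift of one another (since right multiplication by a $D_\tau$ just permutes columns and rescales, carrying the two tracks along together), can be written as a product of type~3 elementary matrices alone. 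So the real statement to prove is: \emph{a product of type~3 elementary matrices that equals a sum of exactly two invertible permutation-track matrices $D_{\pi}+D_{\sigma}$ forces $\pi$ and $\sigma$ to be related by a shift of $0$ or $\tfrac n2$.}

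First I would record the structure of a product of type~3 matrices. Each $E_{i+k\cdot(j)}$ is $I$ plus a single off-diagonal entry, so any such product $M=E_3^{(1)}\cdots E_3^{(r)}$ is unitriangular after simultaneous row/column reordering only in special cases; in general $M$ has $1_R$ (tropical $0$) on the diagonal and its off-diagonal entries are ``sums of paths'': $M_{i,j}$ for $i\ne j$ is the tropical sum over directed walks $i\to\cdots\to j$ using the edges introduced by the elementary operations, with weights the products of the $k$'s. The crucial consequences I want are: (a) every diagonal entry of $M$ is $1_R=0$ and is \emph{tangible} (the identity permutation track is always present and never ghost-cancelled, because the $1_R$ on the diagonal of each factor contributes $0$ and any path-based contribution to $M_{i,i}$ would have to be a nonempty closed walk, and one shows such a walk cannot exceed the diagonal $0$ in a \emph{product} of these matrices — more precisely if some closed walk had weight $\ge 0$ the product would already fail to be a matrix whose off-diagonal structure is as claimed, cf.\ the discussion in~\S3); and (b) if $M=D_\pi+D_\sigma$, then comparing diagonals forces the identity to be one of $\pi,\sigma$, say $\sigma=\mathrm{id}$, hence $\pi$ itself must be realized as a single ``surviving'' set of path contributions.

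Next, with $\sigma=\mathrm{id}$ we have $A=D+D_\pi$ with $D$ a diagonal (the identity track) and $D_\pi$ a single off-diagonal track; the off-diagonal support of $A$ is exactly $\{(i,\pi(i)):i,\ \pi(i)\ne i\}$, and on that support the track $\pi$ is \emph{invertible}, i.e.\ for each such $i$ the entry $a_{i,\pi(i)}$ is tangible and is the unique contribution — there is no path of length $\ge 2$ in $M$ tying with it, and moreover distinct off-diagonal entries of $A$ lie in distinct rows and distinct columns. The heart of the argument is then combinatorial: I claim that the directed graph on $\{1,\dots,n\}$ whose edges are the $(i,\pi(i))$ must, in order for $M$ to have \emph{no} extra off-diagonal entries beyond this track (every composable pair of edges $(i,\pi(i)),(\pi(i),\pi^2(i))$ would create a length-$2$ walk contributing to $M_{i,\pi^2(i)}$, which must therefore either vanish or coincide with the diagonal — forcing $\pi^2(i)=i$), be a disjoint union of $2$-cycles and fixed points. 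Equivalently $\pi^2=\mathrm{id}$. Combined with the shift hypothesis $\pi(i)=i+t\pmod n$ for all $i$ (so $\pi$ is the single $n$-cycle-power $x\mapsto x+t$), $\pi^2=\mathrm{id}$ gives $2t\equiv 0\pmod n$, i.e.\ $t\equiv 0$ or $t\equiv \tfrac n2\pmod n$, contradicting the hypothesis that $t$ avoids exactly these two values. This is where $n>2$ enters: for $n=2$ the only nonzero shift \emph{is} $\tfrac n2$, so the hypothesis is vacuous and the $2\times2$ case is (correctly) excluded.

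The main obstacle, and the step I would spend the most care on, is making rigorous the ``path/walk'' description of a product of type~3 elementary matrices over the supertropical semiring and, in particular, proving claim~(a) above: that in such a product the diagonal entries stay tangibly equal to $0$ and that the only way the product can have an off-diagonal pattern supported on a single invertible track is the $\pi^2=\mathrm{id}$ configuration — one has to rule out that ghost cancellations or higher-length walks conspire to reproduce exactly $D+D_\pi$ with extra support disappearing. I would handle this by an induction on the number $r$ of type~3 factors: track how the support and the tangibility of entries evolve when one more $E_{i+k\cdot(j)}$ is multiplied on the left (this only modifies row $i$, adding $k$ times row $j$), and show that either a new off-diagonal entry is created in row $i$ whose column is not among $\{(i',\pi(i'))\}$, or a ghost appears, or we remain in the $\pi^2=\mathrm{id}$ regime; since the target $A$ has tangible entries and the prescribed support, only the last option survives. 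Once that bookkeeping is in place, the shift computation $2t\equiv0\pmod n$ is immediate.
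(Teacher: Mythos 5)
Your approach is genuinely different from the paper's. The paper argues locally: after pushing all type-$3$ factors to the left, it examines the single \emph{last} type-$3$ operation and shows, using the no-$2$-cycle property of $\sigma^{-1}\pi$, that this operation must simultaneously write a nonzero value into a position that needs to stay $0_R$ (which can never be undone). You attempt a global argument via a ``walks'' description of the whole product of type-$3$ factors. That description is correct as far as it goes (the $(i,j)$ entry is the tropical sum over decreasing-index walks $i\to\cdots\to j$), but the argument as written has three independent gaps.

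First, your claim~(a), that every diagonal entry of a product $M$ of type-$3$ matrices is tangibly $1_R$, is false in general. Already $E_{1+k\cdot(2^{\mathrm{nd}}\mathrm{row})}\,E_{2+m\cdot(1^{\mathrm{st}}\mathrm{row})}$ has $(1,1)$ entry $1_R+km$, which exceeds $1_R$ whenever $km>1_R$. You gesture at why this cannot happen ``in context'' (the off-diagonal structure would break), but that is exactly the thing that needs a proof and it is not at all immediate; you acknowledge this is the hard step, but nothing in the proposal closes it. Without it, the diagonal comparison that drives the rest of the argument does not get off the ground.

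Second, even granting~(a), your claim~(b) does not follow: if $M_{ii}=1_R$ for all $i$ and $M=D_{\pi'}+D_{\sigma'}$, you only conclude that for each $i$ \emph{one of} $\pi'(i)=i$ or $\sigma'(i)=i$ holds (exactly one, since $\sigma'^{-1}\pi'$ has no fixed points). This forces a block-diagonal decomposition $[n]=S\sqcup([n]\setminus S)$ with $\pi'|_S=\mathrm{id}$, $\sigma'|_{[n]\setminus S}=\mathrm{id}$; it does not force $\sigma'=\mathrm{id}$ globally. The argument could be repaired by running the involution step inside each block, but as written the step ``say $\sigma=\mathrm{id}$'' is a nontrivial loss of generality.

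Third, the reduction $A\mapsto AQ^{-1}$ does not preserve the shift hypothesis. If $Q^{-1}=D_\tau$ then the new tracks are $\tau\pi$ and $\tau\sigma$, and $\tau\pi(i)=\tau\sigma(i)+t$ holds only when $\tau$ commutes with the shift-by-$t$ map, which it generally does not. What \emph{is} preserved is the cycle type of $\sigma^{-1}\pi$ (indeed $(\tau\sigma)^{-1}(\tau\pi)=\sigma^{-1}\pi$), and the correct invariant form of the hypothesis is that $\sigma^{-1}\pi$ has all cycles of length $\ge 3$ — exactly the paper's ``fact~4.'' Your final computation $\pi(i)=i+t$, $\pi^2=\mathrm{id}\Rightarrow 2t\equiv 0$ leans on the shift form rather than this invariant; the intended contradiction (``$\sigma^{-1}\pi$ would be an involution, impossible since its cycles have length $\ge3$'') is recoverable, but you would need to rephrase the argument in terms of $\sigma^{-1}\pi$ rather than the literal shift. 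In short: the combinatorial target ($\pi^2=\mathrm{id}$ forcing a length-$\le 2$ cycle in $\sigma^{-1}\pi$) is the right idea and is genuinely different from the paper's local analysis, but as presented the proof has a real unresolved gap at~(a), a false intermediate claim at~(b), and an inaccurate invariance claim in the normalization step.
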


\begin{proof}
\noindent We notice some important facts regarding the process of constructing a factorization for A:

\noindent 1) An elementary row operation of type 3 that changes a $0_R$ entry would raise it beyond adjustment to the entry of $A$, due to the lack of additive inverses.
\vskip 0.25 truecm
\noindent 2) Since the construction starts with the identity matrix and ends with two non-zero permutation tracks, throughout the process, every row and column must have one or two non-zero entries. 
\vskip 0.25 truecm
\noindent 3) Elementary matrices~of types 1 and 2 do not change the number of zeros in the matrix.
\vskip 0.25 truecm
\noindent 4) The requirement $t\ne \frac{n}{2}$ implies that if $\sigma(j)=\pi(i)$ for some $i,j$, then $\sigma(i)\ne\pi(j)$.

\noindent \textit{Proof.} Assume $\sigma (i)=\pi (j)$. Then $$\sigma (j)=\pi(i)= \sigma (i)+t=\pi (j)+t= \sigma (j)+2t\pmod n,$$  which means~$2t=0\pmod n$ and we get~$t=0\ \text{or}\ t=\frac{n}{2}$, contrary to the assumption on~$t$.

\vskip 0.25 truecm

Assume that such a matrix can be factored. According to Claim~3.1 we may obtain a factorization whose elementary matrices~of type 3  appears at its left end. Let us look at the matrix we receive one step before applying this last elementary matrix of type~3. Without loss of generality we may assume it yields the last entry on the track of the permutation~$\sigma$. Therefore we now have a matrix with~$2n-1$ non-zero entries: 
$$a_{i_1,\sigma (i_1)},\cdots,a_{i_{n-1},\sigma (i_{n-1})},\ \ a_{i_1,\pi (i_1)},\cdots,a_{i_{n-1},\pi (i_{n-1})}\text{\and }b,$$ where  $b$ is in the $i_n,\pi(i_n)$  position. We will show that we cannot produce the last non-zero entry under our assumptions, using elementary matrix of type 3.

The last elementary matrix in the factorization would change the zero in the~$i_n,\sigma (i_n)$ position to~$a_{i_n,\sigma (i_n)}$, by adding a row to row~$i_n$. In order to do so, we must use~$a_{k,\pi (k)}$ where~$\pi (k)= \sigma (i_n)$, since it is the only non-zero entry in this column. We already produced the~$a_{k,\sigma (k)}$ entry in the~$k^{th}$ row, which is different than the $k,\pi (i_n)$ position since $\sigma (k)\ne\pi (i_n)$.  This $k,\sigma (k)$ position would influence the~$i_n,\sigma (k)$ position in  row~$i_n$. However, the only other non-zero entry we want in the~$i_n$ row is~$a_{i_n,\pi (i_n)}$. That would require once again~$\sigma (k)=\pi (i_n)$, which cannot occur. 

\begin{center}\includegraphics{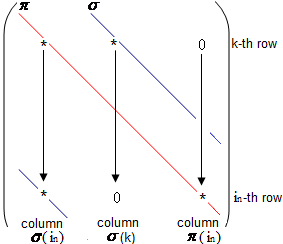}

\tiny{Figure 1. The $k^{th}$ row (on the top) recovers one position in the $i_n^{th}$ row (on the bottom), but, simultaneously, changes a zero-entry beyond adjustment}.\end{center}

\end{proof}

\vskip 0.25 truecm

\begin{exa}

The $3\times 3$ matrix

$$A=\left(
\begin{array}{ccc}
1_R & \alpha_1 & 0_R \\
0_R  &1_R & \alpha_2 \\
\alpha_3 &0_R  & 1_R
\end{array}
\right)$$

\vskip 0.25 truecm

\noindent  is not factorizable, where $\alpha_1,\alpha_2,\alpha_3\ne 0_R$. If it were factorizable then it would have a factorization such that the last elementary matrix~is of type~3. We may assume we have already obtained the first and second rows. The general case is proved analogously by writing $\alpha_{j_i}$ instead of $\alpha_i$, $\forall i=1,2,3$.

\noindent In order to obtain $\alpha_3$ we must use the only non-zero entry in its column, which is in position~$(1,1)$. That is, applying~$E_{3+\alpha_3\cdot (1^{st}row)}$ (the $(1,3)$ position is $0_R$. Therefore the~(3,3) position has already been obtained at this point). However, this operation would raise the zero in the $(3,2)$ position  beyond adjustment.  
\end{exa}

This counterexample provides good intuition for the source of the factorization problem of  tropical matrices. In a way, some entries are "too small" to be obtained by any factorization. In fact, this example will function as the base case for the inductive proof of the classification of factorization of $3\times 3$ matrices.
\vskip 0.25 truecm

These non-factorizable matrices relate to Buchholz' conditions for multiplicativity of the~$trop$ valuation in the following way: 

\begin{lem} Let $A,B$ be square matrices of size $n$ over the field $K=\mathbb{C}\{\{t\}\}$ of Puiseux series. Then \begin{equation}trop(XY)=trop(X)trop(Y),\text{ where }X=(a_{i,j}),Y=(b_{i,j}),\end{equation} if and only if the terms of lowest power of $t$ in the Puiseux series $\Sigma_{k=1}^na_{i,k}b_{k,j}$ do not cancel for every $i$ and $j$.
\end{lem}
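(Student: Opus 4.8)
The plan is to unwind the definitions on both sides of the claimed identity and compare entry by entry. Fix $i,j$. The $(i,j)$ entry of $trop(XY)$ is $trop\big(\sum_{k=1}^n a_{i,k}b_{k,j}\big)$, where the sum is the ordinary sum in the Puiseux field $K$; the $(i,j)$ entry of $trop(X)trop(Y)$ is the tropical sum $\bigoplus_{k=1}^n trop(a_{i,k})trop(b_{k,j}) = \max_k\{trop(a_{i,k}) + trop(b_{k,j})\}$. Using $trop = -\mathrm{val}$ and that $\mathrm{val}$ is a valuation, one always has $trop\big(\sum_k a_{i,k}b_{k,j}\big) \le \max_k\{trop(a_{i,k}b_{k,j})\} = \max_k\{trop(a_{i,k})+trop(b_{k,j})\}$, i.e.\ $trop(XY) \vDash_{gs}$-type inequality holds termwise; this is just the supertropical/valuation-theoretic inequality already alluded to before the lemma. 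So the only question is when equality holds, and the two sides disagree precisely at those $(i,j)$ where this inequality is strict.

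First I would record the elementary valuation facts I need: writing each $a_{i,k}b_{k,j} = c_{i,j,k}t^{\mathrm{val}(a_{i,k}b_{k,j})} + (\text{higher order})$ with $c_{i,j,k}\in\mathbb{C}^\times$ (or the term is zero), the lowest power of $t$ appearing in $\sum_k a_{i,k}b_{k,j}$ is $\min_k \mathrm{val}(a_{i,k}b_{k,j})$ \emph{unless} the leading coefficients of the summands achieving that minimum sum to zero in $\mathbb{C}$ — in which case the actual valuation is strictly larger, equivalently $trop$ of the sum is strictly smaller than $\max_k\{trop(a_{i,k})+trop(b_{k,j})\}$. This is the standard "the valuation of a sum is the min of the valuations, provided there is no cancellation among the leading terms" principle. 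Conversely, if the leading terms among the minimizers do \emph{not} sum to zero, the valuation of the sum equals that min, so $trop$ of the sum equals the max, and equality holds in the $(i,j)$ slot.

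Then the two directions of the iff fall out directly. For the "if" direction: if for every $i,j$ the lowest-power terms of $\sum_k a_{i,k}b_{k,j}$ do not cancel, then by the principle above every entry of $trop(XY)$ equals the corresponding entry of $trop(X)trop(Y)$, so the matrices are equal. For the "only if" direction (contrapositive): if for some $i,j$ the lowest-power terms do cancel, then $trop\big((XY)_{i,j}\big) < \max_k\{trop(a_{i,k})+trop(b_{k,j})\} = \big(trop(X)trop(Y)\big)_{i,j}$, so $trop(XY)\ne trop(X)trop(Y)$, contradicting \eqref{eq:troplem} — here I mean the displayed identity in the lemma; I would not introduce a label the paper hasn't set, so in the actual text I would just refer to "the hypothesized identity." Hence the identity forces no cancellation everywhere.

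The main obstacle, such as it is, is purely a matter of care rather than depth: one must be precise about what "the terms of lowest power of $t$ do not cancel" means when several summands share the minimal valuation and when some $a_{i,k}$ or $b_{k,j}$ vanish (so the corresponding summand contributes nothing and should be excluded from the min), and one must handle the degenerate case in which \emph{every} summand in row $i$/column $j$ is zero, where both sides give $0_R=-\infty$ and equality is automatic. I would also remark that this is essentially Buchholz's formulation from \cite{TLI}, recast in the supertropical language of the present paper, so the contribution of the lemma is the translation: the non-factorizable matrices exhibited in Proposition~3.2 and Example~3.3 are exactly the obstructions that can arise as $trop(XY)$ when this cancellation occurs, which is the point the surrounding text is building toward.
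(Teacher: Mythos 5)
Your argument is correct and is the expected one, but it is worth noting that the paper itself does not prove this lemma at all: the paper's entire ``proof'' is the citation ``See~[TLI, Lemma~4.36],'' i.e.\ it defers to Buchholz's thesis. So there is no in-paper argument to compare against; what you have done is supply the self-contained proof that the citation replaces. Your proof is the standard valuation-theoretic one: writing $trop=-\mathrm{val}$, the $(i,j)$ entry of $trop(XY)$ is $trop\bigl(\sum_k a_{i,k}b_{k,j}\bigr)$, the $(i,j)$ entry of $trop(X)trop(Y)$ is $\max_k\{trop(a_{i,k})+trop(b_{k,j})\}$, and the non-Archimedean inequality $\mathrm{val}(\sum)\ge\min_k \mathrm{val}$ is an equality precisely when the leading coefficients of the summands attaining the minimum valuation do not sum to zero in $\mathbb{C}$. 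Equality of the two tropical matrices therefore holds if and only if this non-cancellation condition holds in every entry, which is exactly the stated condition. Your handling of the degenerate cases (vanishing summands should be excluded from the min; full cancellation to $0_K$ gives $-\infty$ on the left but not the right, so it correctly registers as a failure of the identity) is right and is the kind of care that would need to be made explicit in a full write-up. This is almost certainly the same argument Buchholz gives, so the contribution of your version is simply to make the paper self-contained at this point.
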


\begin{proof} See ~\cite[Lemma 4.36]{TLI}.\end{proof}

Looking at  $$B=trop^{-1}(A)=
\left(
\begin{array}{ccc}
1_K & a_1 & 0_K \\
0_K  &1_k & a_2 \\
a_3 &0_K  & 1_K
\end{array}
\right),\ where\ a_i=trop^{-1}(\alpha_i),$$
the matrix $trop^{-1}(A)$ can be factored as
$$LU=\left(
\begin{array}{ccc}
1_K &  0_K & 0_K \\
0_K  &1_k &  0_K \\
a_3 &-a_1a_3 & 1_K
\end{array}
\right)\left(
\begin{array}{ccc}
1_K & a_1 & 0_K \\
0_K  &1_k & a_2 \\
 0_K &0_K  & 1_K+ a_1 a_2 a_3
\end{array}
\right).$$
Next, $trop(LU)=A$ while $trop(L)trop(U)$ will yield
$$\left(
\begin{array}{ccc}
1_R &  0_R & 0_R \\
0_R &1_R &  0_R \\
\alpha_3 & \alpha_1\alpha_3 & 1_R
\end{array}
\right)\left(
\begin{array}{ccc}
1_R & \alpha_1 & 0_R \\
0_R  &1_R & \alpha_2 \\
 0_K &0_K  & 1_R+ \alpha_1 \alpha_2 \alpha_3
\end{array}
\right).$$

\noindent Looking at position~$(3,2)$ in these two matrices, we notice that~$(trop(L)trop(U))_{3,2}$ is~$\alpha_1\alpha_3+\alpha_1\alpha_3$, while~$(A)_{3,2}$ is $-\infty$, which means the terms of lowest power of $t$ in the Puiseux series $(trop^{-1}(A))_{3,2}$ are  canceled, creating  inequality in (3.1).

\vskip 0.7 truecm

\section{Factorization of  $3\times 3$  matrices}

The fact that the determinant of a non-singular matrix $A$ over $R$ is tangible means that the matrix has one dominant permutation track. By using elementary matrices of type 1 (a permutation matrix) we can relocate the corresponding permutation to the diagonal and by using elementary matrices of type 2 (a diagonal matrix) we can change the diagonal entries to~$1_R$, receiving a non-singular matrix with dominant Id-permutation track equals~$1_R$. That is, $A=P\bar{A}$ where $P$ is an invertible matrix (See Remark 2.7) such that $|P|=|A|$ and~$|\bar{A}|=1_R$. 

We denote $\bar{A}$ as the \textbf{ normal form of $A$}, and say that $P$ normalizes the dominant permutation track to the diagonal. This is not the same as normal matrices, defined in ~\cite{TLI}.

We may also obtain a normal form for not strictly singular matrices by relocating and normalizing one of the dominant permutation tracks. However, such a matrix will  have an invertible determinant over $\mathbb{T}$ and $T$. Therefore, by "matrix in normal form" we mean normal forms of not strictly singular matrices over $\mathbb{T}$, or normal forms of non-singular matrices over $R$.

\begin{rem} If $\bar{A}$ is in normal form, then every permutation track is dominated by $1_R$ (with the possibility of equality when we are working over~$\mathbb{T}$). 

Often, for each permutation track, we write the permutation as a product of disjoint cycles, referred to as the cycle tracks of the permutation track. 

We notice that since the entries on the diagonal are $1_R$, every cycle track itself presents a permutation track, when composed with the appropriate Id cycle tracks. Therefore every cycle track is  being dominated by $1_R$ (with the possibility of equality when we are working over~$\mathbb{T}$). Consequently,  for any term $M$ we get that~$M\cdot$(cycle track)  is  dominated by $M$ (with the possibility of equality when we are working over~$\mathbb{T}$).
\end{rem}

\begin{cla}
A non-singular matrix $A$ over $R$ (or not strictly singular over $\mathbb{T}$) is factorizable if and only if its normal form $\bar{A}$ is factorizable.
\end{cla}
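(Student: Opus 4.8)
The plan is to reduce factorizability of $A$ to that of its normal form $\bar A$ by exhibiting explicit invertible matrices linking the two, and then appealing to Remark~2.7(a) together with the group-like behaviour of invertible matrices under multiplication. Recall that, by construction, $A=P\bar A$ where $P=D_\pi$ is invertible, i.e. $P$ is itself a product of elementary matrices of type~1 and type~2 (Remark~2.7(a)). Since $P$ is invertible, $P^{-1}$ is also a product of elementary matrices of types~1 and~2, and $\bar A=P^{-1}A$. This gives the two implications almost for free, as follows.

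First I would prove the forward direction: if $\bar A$ is factorizable, say $\bar A=E_{m}\cdots E_{1}$ is a product of tropical elementary matrices, then $A=P\bar A=PE_{m}\cdots E_{1}$, and since $P$ expands as a product of elementary matrices of types~1 and~2, concatenating these expansions exhibits $A$ as a product of tropical elementary matrices; hence $A$ is factorizable. For the converse, suppose $A=F_{\ell}\cdots F_{1}$ is factorizable; then $\bar A=P^{-1}A=P^{-1}F_{\ell}\cdots F_{1}$, and writing $P^{-1}$ as a product of elementary matrices of types~1 and~2 again yields a factorization of $\bar A$ into tropical elementary matrices. The only point requiring a line of justification is that $P^{-1}$ is indeed invertible and hence so expressible: this is immediate since $P=D_\pi$ with $D$ a diagonal matrix of tangible invertible entries and $P_\pi$ a permutation matrix, so $P^{-1}=P_{\pi^{-1}}D^{-1}$, where $D^{-1}$ is the diagonal matrix with the additive (ordinary) inverses of the entries of $D$; both factors are products of elementary matrices of types~1 and~2.

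I do not expect any genuine obstacle here — the statement is essentially a bookkeeping lemma recording that left-multiplication by an invertible matrix preserves factorizability. The mild subtlety worth flagging explicitly is that the two notions of ``normal form'' in play (for non-singular matrices over $R$, and for not strictly singular matrices over $\mathbb{T}$) are handled uniformly: in both cases $A=P\bar A$ with $P$ invertible, so the argument above applies verbatim. One should also note, for completeness, that normalizing the diagonal of $\bar A$ to $1_R$ does not reintroduce singularity, and that this step is already built into the definition of $\bar A$ rather than part of what must be proved here. Thus the proof is a short two-way argument: factor $P$ (resp.\ $P^{-1}$) via Remark~2.7(a), splice with a given factorization of $\bar A$ (resp.\ $A$), and conclude.
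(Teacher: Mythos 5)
Your proof is correct and follows essentially the same route as the paper: write $A=P\bar A$ with $P$ invertible, use Remark~2.7(a) to express $P$ and $P^{-1}$ as products of elementary matrices of types~1 and~2, and splice these with a given factorization of $\bar A$ or $A$ to transfer factorizability in both directions. The extra details you give (the explicit form $P^{-1}=P_{\pi^{-1}}D^{-1}$ and the uniform handling of the $R$ and $\mathbb{T}$ cases) are consistent with the paper's argument, which states the same conclusion more tersely.
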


\begin{proof}

\noindent Let $P$ be the invertible matrix that normalizes the dominant permutation track in $A$ to the diagonal.  According to Remark~2.7 $P$ is invertible, and we can conclude that~$\bar{A}=P^{-1}A$. Of course $P^{-1}$ is invertible, and therefore by using Remark~2.7 again we have that $P^{-1}$ is also a product of elementary matrices~of type~1 and 2. Hence the factorizability of~$A$ and the factorizability of~$\bar{A}$ are equivalent.
\end{proof}

\begin{lem}
 
Given any nondiagonal entry $a_{i,j}$ of a $3\times 3$ matrix, there exists precisely one permutation track in which this nondiagonal entry appears  and for which all other entries are also nondiagonal.
\end{lem}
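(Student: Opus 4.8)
The plan is to argue purely combinatorially about permutations in $S_3$, using the structure of the diagonal. Let $a_{i,j}$ be a fixed nondiagonal entry, so $i \neq j$. A permutation track through $a_{i,j}$ corresponds to a permutation $\pi \in S_3$ with $\pi(i) = j$, and the condition that \emph{all} entries of the track be nondiagonal means $\pi(k) \neq k$ for every $k$, i.e. $\pi$ is a derangement (has no fixed point). So the statement reduces to: for each ordered pair $(i,j)$ with $i \neq j$, there is exactly one derangement $\pi \in S_3$ with $\pi(i) = j$.

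First I would recall that $S_3$ has exactly two derangements, namely the two $3$-cycles $(1\,2\,3)$ and $(1\,3\,2)$. For $n = 3$ this is immediate: a derangement cannot be a transposition (a transposition fixes the third point) and cannot be the identity, so it must be one of the two $3$-cycles. Next I would observe that the two $3$-cycles are inverse to each other, and that a $3$-cycle on $\{1,2,3\}$ is completely determined by the image of any single point: if $\pi$ is a $3$-cycle and $\pi(i) = j$ with $j \neq i$, then $j$ is forced to be one of the two elements different from $i$, and each choice picks out exactly one of the two $3$-cycles (indeed $(1\,2\,3)$ sends $1\mapsto 2$, $2\mapsto 3$, $3\mapsto 1$, while $(1\,3\,2)$ sends $1\mapsto 3$, $3\mapsto 2$, $2\mapsto 1$). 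Hence, given the ordered pair $(i,j)$, exactly one of the two derangements satisfies $\pi(i) = j$, which gives existence and uniqueness simultaneously.

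To make this fully rigorous I would just run the small case check: there are $6$ nondiagonal positions $(i,j)$, and the map sending a derangement $\pi$ to the pair $(i, \pi(i))$ (for fixed $i$) is injective on the two-element set of derangements, while ranging over all of $i$ the two $3$-cycles together hit all six nondiagonal positions, each exactly once. Equivalently, the entries of the track of $(1\,2\,3)$ are $a_{1,2}, a_{2,3}, a_{3,1}$ and those of $(1\,3\,2)$ are $a_{1,3}, a_{3,2}, a_{2,1}$; these six positions are distinct and exhaust the nondiagonal positions, so each nondiagonal $a_{i,j}$ lies on exactly one of the two all-nondiagonal tracks.

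There is no real obstacle here; the only thing to be careful about is the bookkeeping of what "permutation track with all entries nondiagonal" means — translating it correctly into "derangement" is the whole content, and after that it is a finite verification. The mild subtlety worth stating explicitly is why no track containing a nondiagonal entry and having all nondiagonal entries can come from a transposition or the identity: the identity's track is the diagonal, and a transposition $(k\,\ell)$ fixes the remaining index $m$, so its track contains the diagonal entry $a_{m,m}$, disqualifying it.
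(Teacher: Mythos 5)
Your proof is correct and takes essentially the same route as the paper: both reduce the condition ``all entries nondiagonal'' to $\pi$ being a derangement, identify the two $3$-cycles as the only derangements in $S_3$, and observe that their tracks together cover the six nondiagonal positions exactly once each. Your write-up is simply more explicit about the bookkeeping; there is no substantive difference.
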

\vskip 0.2 truecm

\begin{proof}
 A permutation track is of the form $a_{1,\pi (1)}a_{2,\pi (2)}a_{3,\pi (3)}$ for some $\pi \in S_3$. If we want all the entries to be nondiagonal, then $\pi(i)\ne i,\ \forall i=1,2,3$, which means $\pi=(1\ 2\ 3)$ or~$(1\ 3\ 2)$ and therefore the possible  permutation tracks are $a_{1,2}a_{2,3}a_{3,1}$ and $a_{1,3}a_{3,2}a_{2,1}$ which consist of \textit{all} of the nondiagonal entries,  \textit{exactly} one time each.
\end{proof}

\vskip 0.25 truecm

\begin{df} An \textbf{entry condition} is the relation ($>,<,=$) between a nondiagonal entry $a_{i,j}$ in  normal form $A=(a_{i,j})$ and the product $a_{i,k}a_{k,j}$ of the other nondiagonal entry in its row and the other nondiagonal entry in its column. We refer to the matrix 
$$(b_{i,j})\text{ where }b_{i,j}=\begin{cases}1_R,\text{ where } i=j\\a_{i,j}+a_{i,k}a_{k,j},\text{ where }i,j\text{ and }k\text{ are distinct }\end{cases}$$
as \textbf{the matrix of entry conditions}.
\end{df}

\begin{lem}
A $3\times 3$ matrix in normal form over $R$  is not factorizable if and only if there exists a permutation track of  nondiagonal entries all of whose entry conditions satisfy~$<$.  
\end{lem}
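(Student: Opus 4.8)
The plan is to prove both directions by analysing what must happen at the last step of any factorization whose type-3 elementary matrices appear at the left end (which we may always assume by Claim~3.1). Throughout I will work with the normal form $\bar A = (a_{i,j})$, so the diagonal entries are $1_R$ and every permutation track and cycle track is dominated by $1_R$ (Remark~4.1).

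\emph{The ``if'' direction (a bad track forces non-factorizability).} Suppose there is a nondiagonal permutation track $a_{1,\pi(1)}a_{2,\pi(2)}a_{3,\pi(3)}$ --- by Lemma~4.4 this is one of the two $3$-cycle tracks, say the entries $a_{1,2},a_{2,3},a_{3,1}$ --- all of whose entry conditions satisfy strict $<$; that is, $a_{1,2} < a_{1,3}a_{3,2}$, $a_{2,3} < a_{2,1}a_{1,3}$, and $a_{3,1} < a_{3,2}a_{2,1}$. I want to show $\bar A$ is not factorizable, generalizing Example~3.3 (which is the special case where the other $3$-cycle entries $a_{1,3},a_{2,1},a_{3,2}$ are each $0_R$, forcing the relevant products to be $0_R$ and hence dominating the other entries trivially is impossible --- so there Example~3.3's obstruction is that those entries are literally zero). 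Here the obstruction is that they are too small. As in Proposition~3.2, look at the matrix one step before the final type-3 operation. Since elementary matrices of type~1 and~2 preserve the number of zeros and a type-3 operation can only raise a $0_R$ entry ``beyond adjustment'' (it cannot later be lowered), at each stage we have controlled support. The key point: to produce the entry $a_{i,j}$ in the bad track using a type-3 row operation $E_{i + k\cdot(m^{\text{th}}\text{row})}$, we add a multiple of row $m$ to row $i$; the contribution to position $(i,j)$ is $k\cdot a_{m,j}$, so we need $k \cdot a_{m,j} = a_{i,j}$ while $k \cdot 1_R$ (hitting the diagonal $1_R$ in column $m$ of row $m$) lands in position $(i,m)$. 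If $(i,m)$ is itself a nondiagonal position of the \emph{same} bad track, we are forced into the cyclic relation of Proposition~3.2's item (4)-style argument and cannot complete; if $(i,m)$ is the \emph{other} nondiagonal position in row $i$, then $k$ overshoots because $a_{i,m} < a_{i,j}a_{j,m}$ forces $k = a_{i,j}/a_{m,j} = a_{i,j}a_{j,i}$ (using $m=j$ and $a_{m,j}=a_{j,i}$ in the $3\times 3$ setting with the diagonal normalized) to exceed $a_{i,m}$, raising that zero-or-small entry beyond adjustment. I would formalize this by checking, case by case over $m \in \{1,2,3\}\setminus\{i\}$, that every admissible choice of the source row $m$ for the last operation creates an unfixable entry, exactly as in Example~3.3 but with the strict inequalities $a_{i,m} < a_{i,j}a_{j,m}$ replacing the ``$=0_R$'' hypothesis.

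\emph{The ``only if'' direction (no bad track allows a factorization).} Suppose that for \emph{each} of the two nondiagonal $3$-cycle tracks, at least one entry condition is $\ge$. I give an explicit factorization. The idea: pick a nondiagonal entry $a_{i,j}$ whose entry condition is $\ge$, i.e. $a_{i,j} \vDash_{gs} a_{i,k}a_{k,j}$ (or at least $a_{i,j}^\nu \ge (a_{i,k}a_{k,j})^\nu$, which over $\mathbb T$ is $\ge$); then that entry can be ``routed through'' position $k$ --- producing $a_{i,j}$ via a type-3 operation using row $k$ does not create an obstruction, because the collateral entry it would disturb is already dominated. Concretely, I would show that under the hypothesis one can order the six nondiagonal entries so that building them one at a time by type-3 row operations (working from $I$ outward, as in the expansion philosophy of \S3), each new entry either sits in a row/column still having room, or, when it would collide, the collision entry is $\vDash_{gs}$-dominated and hence what actually appears there is already the correct value $a_{i',j'}$ of $\bar A$. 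The matrix of entry conditions from Definition~4.5 is the bookkeeping device: an entry with condition $>$ or $=$ is ``safe to route,'' and the hypothesis guarantees that in each $3$-cycle track there is a safe entry, which is exactly enough to thread both cycles without conflict. I expect to need Claim~3.1 again to commute the type-3 matrices to one side and Remark~2.7(b) to absorb the resulting triangular pieces, together with the observation (Remark~4.1) that $M\cdot(\text{cycle track})$ is dominated by $M$, which is what makes the ``collateral entry already correct'' claim go through.

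\emph{Main obstacle.} The delicate part is the ``only if'' direction: showing that having one $\ge$ entry condition in each $3$-cycle track is genuinely \emph{sufficient} to construct a full factorization, i.e. that the two ``safe routings'' (one per cycle) can be scheduled so that they do not interfere with each other or with the diagonal normalization. This requires a careful finite case analysis of which entry is the safe one in each cycle (there are $3\times 3 = 9$ combinations up to symmetry, cut down by the dihedral symmetry of the $3\times 3$ pattern), and in each case exhibiting the explicit sequence of elementary matrices and verifying, using $\vDash_{gs}$ (resp.\ $\ge$ over $\mathbb T$) and Remark~4.1, that the product is exactly $\bar A$ and not merely a ghost-surpassing approximation. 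The ``if'' direction is essentially a bookkeeping generalization of Proposition~3.2 and Example~3.3 and should be routine once the right invariant (number of zeros, plus ``beyond adjustment'' monotonicity) is set up.
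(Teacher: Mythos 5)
Your overall plan (commute the type-3 matrices to the left via Claim~3.1 and analyse the last one) matches the paper's, but both directions as you sketch them have real gaps.

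\textbf{The ``if'' direction.} You assert that ``every admissible choice of the source row $m$ for the last operation creates an unfixable entry,'' treating this as a direct generalization of Example~3.3. This is not what happens, and it is not what the paper shows. The paper's case analysis (applying $E_{3+m_j\cdot(j^{\text{th}}\text{row})}$ or $E_{3+m_k\cdot(k^{\text{th}}\text{row})}$ where $(3,j)$ is the $<$ position and $(3,k)$ the $>$ position) concludes that the last operation \emph{does not overshoot}: it forces $\alpha_k=a_{3,k}$, $\alpha_3=1_R$, and only changes the bad entry, giving $\alpha_j\le a_{3,j}$ (and in fact $\alpha_j<a_{3,j}$). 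So there is no immediate contradiction. The obstruction is instead a descent/non-termination argument: the preceding matrix has the \emph{same} pattern of entry conditions (still one bad track, with the bad entry strictly smaller), so iterating over the earlier type-3 factors one either (a) never terminates, or (b) drives the bad-track entries down to $0_R$ and lands on the matrix of Example~3.3/Proposition~3.2, which is not factorizable. Your sketch stops one step too early and draws the wrong conclusion from that step. Also note that the inequality you invoke, $a_{i,m}<a_{i,j}a_{j,m}$, has the wrong sign: the collateral position $(i,m)$ in row $i$ is on the \emph{good} track, hence has condition $>$, not $<$; the $<$ is at $(i,j)$.

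\textbf{The ``only if'' direction.} You acknowledge the gap yourself, and the proposed ``routing'' argument with a case analysis over ``$3\times 3=9$ combinations'' is both harder and unnecessary. The paper first observes that $a_{i,j}a_{j,i}<1_R$ forces: if $(i,j)$ has condition $<$ then the other condition in its row and the other condition in its column are both $>$. Hence ``no bad track'' is equivalent to ``some row has both conditions $\geq$,'' and moreover both conditions cannot be $=$ in that row (that would give $a_{i,j}=a_{i,j}a_{j,k}a_{k,j}<a_{i,j}$). With that row isolated, the factorization is mechanical: build a $2\times 2$ minor on the other two rows, extend it to a $2\times 3$ block using the diagonal $1_R$, and then apply $E_{3+a_{3,i}\cdot(i^{\text{th}}\text{row})}$ for the one or two indices $i$ whose condition is $>$; the $\geq$ conditions guarantee the sums $a_{3,j}+a_{3,k}a_{k,j}$ collapse to $a_{3,j}$. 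This avoids any scheduling of the six entries and any dihedral-symmetry case split. Your routing idea is morally sound but never reduces to a finite verified construction, whereas the paper's reduction to a single good row does the whole job in one stroke.
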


\begin{proof}

\noindent We denote $A=(a_{i,j})$. 

\noindent \underline{$(\Rightarrow$)} There exists a row~$j$ such that both of its entry conditions satisfy~$\geq$. 

We can obtain any $2\times 2$ minor, by using the algorithm in Example 2.9-part(b), as~$2\times 2$ matrices embedded to  rows and columns $i_1$ and $i_2$ in  the $3\times 3$ identity matrix. Then we can recover the third column $i_3$, using $1_R$ in the $i_3,i_3$ position, obtaining a~$2\times 3$ minor:
 $$\underbrace{E_{i_2+a_{i_2,i_3}\cdot(i_3^{th}row)}\cdot E_{i_1+a_{i_1,i_3}\cdot(i_3^{th}row)}}_\text{Recovering the third column}\cdot\  \underbrace{E_{i_2+a_{i_2,i_1}\cdot(i_1^{th}row)}\cdot E_{i_1+a_{i_1,i_2}\cdot(i_2^{th}row)}}_\text{Obtaining a $2\times 2$ minor}.$$

\vskip 0.2 truecm

\noindent Therefore we can conclude that obtaining the third row $i_3$ is the only obstruction to the factorization process of a $3\times 3$ nonsingular matrix.

\vskip 0.25 truecm

\noindent Since $a_{i,j}a_{j,i}<1_R$ for every $i,j$ such that $i\ne j$, we get that if  some nondiagonal entry condition is $<$: $a_{i,j}<a_{i,k}a_{k,j}$, then the entry conditions of the nondiagonal entries in its row and column are $>$: $$a_{i,j}(a_{j,k})<a_{i,k}(a_{k,j}a_{j,k})<a_{i,k}\text{ and }(a_{k,i})a_{i,j}<(a_{k,i}a_{i,k})a_{k,j}<a_{k,j}.$$

\vskip 0.25 truecm

 Therefore, we may assume that the row remains to be constructed is row number three. The general case is being proved analogously by writing $a_{i_t,j}$ instead of~$a_{t,j}$ for~$t=1,2,3$.

\vskip 0.25 truecm

\noindent If both entry conditions are $>$, then by applying $E_{3+a_{3,i}\cdot(i^{th}row)}$ for~$\ i=1,2$ to the $2\times 3$ minor of rows one and two, we obtain $A$:
  
$ $

$$\left(
\begin{array}{ccc}
1_R & 0 & 0\\
0 & 1_R & 0\\
a_{3,1} & 0 & 1_R
\end{array}
\right)
\left(
\begin{array}{ccc}
1_R & 0 & 0\\
0 & 1_R & 0\\
0 & a_{3,2} & 1_R
\end{array}
\right)
\left(
\begin{array}{ccc}
1_R & a_{1,2} & a_{1,3}\\
a_{2,1} & 1_R & a_{2,3}\\
0 & 0 & 1_R
\end{array}
\right)=$$
$$=\left(
\begin{array}{ccc}
1_R & a_{1,2} & a_{1,3}\\
a_{2,1} & 1_R & a_{2,3}\\
a_{3,1}+a_{3,2}a_{2,1} & a_{3,2}+a_{3,1}a_{1,2} & 1_R
\end{array}
\right)=
\left(
\begin{array}{ccc}
1_R & a_{1,2} & a_{1,3}\\
a_{2,1} & 1_R & a_{2,3}\\
a_{3,1} & a_{3,2} & 1_R
\end{array}
\right).$$ 
 
If  one entry condition satisfies $=$ then  we apply only the operation that does not correspond to the $=$ entry condition. Both satisfy $=$ cannot occur since that means: $$a_{i,j}=a_{i,k}a_{k,j}=a_{i,j}a_{j,k}a_{k,j}<a_{i,j}.$$ Contradiction. 
\vskip 0.25 truecm

\noindent \underline{$(\Leftarrow)$} If there is no row whose entry conditions both satisfy $\geq$, then each row and column has exactly one $<$ condition (two conditions $<$ cannot occur in the same row or column). As a result, we get one nondiagonal permutation track of  entry conditions~$>$ and the other nondiagonal permutation track of  entry conditions $<$. We show that in this case the normal form $A$ is not factorizable.  

\vskip 0.25 truecm

Assume $A$ is factorizable. By Claim 3.1 we can obtain a factorization whose elementary matrices~of type 3  appears at its left end. We now look at the last elementary matrix~of type $3$ in this factorization (which is the last elementary matrix~in this factorization). As before, we assume that this elementary matrix~of type 3 operates on the third row, meaning, we are starting with
\begin{equation}\left(
\begin{array}{ccc}
1_R & a_{1,2} &  a_{1,3}\\
 a_{2,1} & 1_R & a_{2,3}\\
\alpha_1 & \alpha_2 & \alpha_3
\end{array}
\right)\end{equation}
\noindent where at least one entry in the third row is different from the entry we want to produce. By applying elementary matrix of type 3, we aspire to obtain the third row:~$( a_{3,1}\   a_{3,2}\ 1_R)$.
\vskip 0.25 truecm
\noindent Assume $a_{3,j}<a_{3,k}a_{k,j}$ (which means $a_{3,k}>a_{3,j}a_{j,k}$). By definition, $j,k$ and $3$ are different and we may apply $ E_{3+m_j\cdot(j^{th}row)}$ or $ E_{3+m_k\cdot(k^{th}row)}$.
\vskip 0.25 truecm

\noindent In case we applied $ E_{3+m_j\cdot(j^{th}row)}$ we get $$a_{3,j}=\alpha_j+m_j\Rightarrow m_j\leq a_{3,j}\Rightarrow \begin{cases}m_ja_{j,k}\leq a_{3,j}a_{j,k}<a_{3,k}\Rightarrow a_{3,k}=\alpha_k+m_ja_{j,3}=\alpha_k\\m_ja_{j,3}\leq a_{3,j}a_{j,3}<1_R\Rightarrow       a_{3,3}=1_R=\alpha_3+m_ja_{j,3}=\alpha_3\end{cases}.$$ 

\noindent In case we applied $ E_{3+m_k\cdot(k^{th}row)}$ we get $$a_{3,j}=\alpha_j+m_ka_{k,j}\Rightarrow  m_ka_{k,j}\leq a_{3,j}<a_{3,k}a_{k,j}\Rightarrow a_{3,k}>m_k\Rightarrow \begin{cases} a_{3,k}=\alpha_k+m_k=\alpha_k\\a_{3,3}=1_R=\alpha_2+m_ka_{k,3}=\alpha_3\end{cases}.$$

Hence, the matrix in (4.1)  differ from $A$  by one entry, the one of condition $<$ in the third row.

\vskip 0.25 truecm

If the remaining entry is bigger than the  desired one then clearly we cannot produce the desired entry with an elementary matrix~of type 3. Thus, it must be smaller, which means we are back to the same entry conditions: one permutation track of nondiagonal entry conditions $<$. Therefore, looking at all the previous elementary matrices~of type~3, we notice that it would again yield a matrix, changed only at the permutation track of nondiagonal entry conditions $<$,  by reducing them:

$ $

$$\left(
\begin{array}{ccc} 
1_R & c &a_{1,3}\\
a_{2,1} & 1_R & b\\
a & a_{3,2} & 1_R
\end{array}
\right)\ 
\text{or} \  
\left(
\begin{array}{ccc}
1_R & a_{1,2} & c\\
a & 1_R & a_{2,3}\\
a_{3,1} & b & 1_R
\end{array}
\right)$$
\begin{center}\tiny{$a,b$ and $c$ has entry conditions $<$.}\end{center}

\noindent If $a,b$ or $c$ never reaches $0_R$ in the string of elementary matrices~of type $3$, then the factorization does not terminate, which is not possible. That leads us to the conclusion that $a,b$ and $c$ reach $0_R$ in this string, which means at some point of the factorization we get either
 
$$\left(
\begin{array}{ccc} 
1_R & 0_R &a_{1,3}\\
a_{2,1} & 1_R & 0_R\\
0_R & a_{3,2} & 1_R
\end{array}
\right)\ 
\text{or} \  
\left(
\begin{array}{ccc}
1_R & a_{1,2} & 0_R\\
0_R & 1_R & a_{2,3}\\
a_{3,1} & 0_R & 1_R
\end{array}
\right)$$

\noindent where the nondiagonal entries remained, are strictly bigger than~$0_R$ due to their entry conditions which satisfy $>$. These matrices are not factorizable according to Proposition~3.2 and therefore cannot appear as a part of any factorization. 

\end{proof}

In Buchholz' terminology, an entry condition $<$ implies the connection between the desired entry, $a_{i,j}$,  and the entry obtained by factorizing over $K$ and then tropicalizing each component. The lowest power of $t$ in $trop^{-1}(a_{i,j})$  is being canceled, causing the valuation of this entry to rise and the tropicalization to drop.

\vskip 0.4 truecm

\begin{exa}

 $$A=
\left(
\begin{array}{ccc} 
0 & -3 & 0\\
1 & 5 & 0\\
3 & 1 & 6
\end{array}
\right)$$ 
\noindent We easily calculate that $|A|=11$ and that the diagonal is the dominant permutation track. Therefore, by applying $E_{5\cdot (2^{nd}row)}E_{6\cdot(3^{rd}row)}$ to the normal form $$\bar{A}=
 \left(
\begin{array}{ccc} 
0 & -3 & 0\\
-4 & 0 & -5\\
-3 & -5 & 0
\end{array}
\right)$$
we might achieve a factorization of $A$.

\noindent The next step would be to check the entry conditions of $\bar{A}$, which may be displayed as:

$$ \left(
\begin{array}{ccc} 
0 & > & >\\
> & 0 & <\\
> & > & 0
\end{array}
\right).$$
We do not have a permutation track of entry conditions $<$ (as could be seen after checking the first row) and therefore $A$ is factorizable, indeed:

$$A=
  \underbrace{\left(
\begin{array}{ccc} 
0 & - & -\\
- & 5 & -\\
- & - & 0
\end{array}
\right)\cdot
 \left(
\begin{array}{ccc} 
0 & - & -\\
- & 0 & -\\
- & - & 6
\end{array}
\right)}_\text{$\bar{A}\rightarrow A$}\cdot
 \underbrace{ \left(
\begin{array}{ccc} 
0 & - & -\\
- & 0 & -\\
- & -5 & 0
\end{array}
\right)\cdot
 \left(
\begin{array}{ccc} 
0 & - & -\\
- & 0 & -\\
-3 & - & 0
\end{array}
\right)}_\text{$3^{rd}\ row$}\cdot$$
$$ \underbrace{\left(
 \begin{array}{ccc} 
0 & - & -\\
- & 0 & -5\\
- & - & 0
\end{array}
\right)\cdot
 \left(
\begin{array}{ccc} 
0 & - & 0\\
- & 0 & -\\
- & - & 0
\end{array}
\right)}_\text{$2\times 3$}\cdot
  \underbrace{\left(
\begin{array}{ccc} 
0 & -3 & -\\
- & 0 & -\\
- & - & 0
\end{array}
\right)\cdot
 \left(
\begin{array}{ccc} 
0 & - & -\\
-4 & 0 & -\\
- & - & 0
\end{array}
\right)}_\text{$2\times 2$}.$$
\end{exa}

\vskip 0.5 truecm

\begin{exa}
 $$A=
\left(
\begin{array}{ccc} 
4 & 3 & 3\\
4 & 5 & 2\\
5 & 7 & 6
\end{array}
\right)$$ 
\noindent We easily calculate that $|A|=15$ and that the diagonal is the dominant permutation track. Therefore, by applying $E_{4\cdot (1^{st}row)}E_{5\cdot (2^{nd}row)}E_{6\cdot (3^{rd}row)}$ to the normal form 
$$\bar{A}=
 \left(
\begin{array}{ccc} 
0 & -1 & -1\\
-1 & 0 & -3\\
-1 & 1 & 0
\end{array}
\right)$$
we might achieve a factorization of $A$.

\noindent The next step would be to check the entry conditions of $\bar{A}$, which may be displayed as:

 $$ \left(
\begin{array}{ccc} 
0 & {{<}} & > \\
> & 0 &  {{<}}\\
 {{<}} & > & 0
\end{array}
\right).$$

\noindent We have a permutation track of entry conditions $<$. Therefore, looking at all the previous elementary matrices~of type 3 would yield the matrix

$$\left(
\begin{array}{ccc} 
0 & a & -1\\
-1 & 0 & b\\
c & 1 & 0
\end{array}
\right)$$

\noindent where either $a,b,c=0_R$, which is not a factorizable matrix, or the factorization does not terminate. Therefore $A$ is not factorizable. 

\end{exa}

This classification would be rather hard to generalize for $n\times n$ matrices since the required number of conditions  increases significantly. In the next section we present a~$\nu$-equivalent approach to supertropical matrices that  helps us in constructing a general tropical factorization for non-singular matrices over $R$ and not strictly singular matrices over $\mathbb{T}$.

\section{$3\times 3$ Quasi-factorization}

In order to recover a factorization result for  not strictly singular $3\times 3$ matrices, we follow the terminology in ~\cite{STLA} when extending the classical definitions by considering the supertropical ghost ideal.

\begin{df} A \textbf{quasi-zero} matrix $Z_G$ is a matrix equal to $0_R$ on the diagonal, and whose off-diagonal entries are ghosts or $0_R$.  A \textbf{quasi-identity} matrix $I_G$ is a nonsingular, multiplicatively idempotent matrix equal to $I + Z_G$, where $Z_G$ is a quasi-zero matrix. 
\end{df}

\begin{df} The $\mathbf{t,l}$-\textbf{minor} $A_{t,l}$ of a matrix $A = (a_{i,j})$ is obtained by deleting the $t^{th}$ row and $l^{th}$ column of $A$. The \textbf{adjoint matrix} $adj(A)$ of $A$ is defined as the matrix~$(a'_{i,j} )$, where
$a'_{i,j} =det(A_{j,i})$. The matrix $A^{\nabla}$ denotes $\frac{adj(A)}{det(A)}$, when $det(A)$ is invertible. Over $R$, $A^{\nabla}$ is defined for non-singular matrices only. Over $\mathbb{T}$, however, $A^{\nabla}$ is defined for every not strictly singular matrix. 

\noindent Notice that $det(A_{j,i})$ may be observed as the sum of all permutation tracks in $A$ that passes through $a_{j,i}$, which is then  deleted from these permutation tracks:    $$det(A_{j,i})=\sum_{\tiny{\begin{array}{cc}\sigma\in S_n:\\\sigma(j)=i\end{array}}}a_{1,\sigma(1)}\cdots a_{j-1,\sigma(j-1)}a_{j+1,\sigma(j+1)}\cdots a_{n,\sigma(n)}.$$ When writing  such a permutation as the product of its disjoint cycles, $det(A_{j,i})$ can be presented as: $$det(A_{j,i})=\sum_{\tiny{\begin{array}{cc}\sigma\in S_n:\\\sigma(j)=i\end{array}}}(a_{i,\sigma(i)}\cdots a_{\sigma^{-1}(j),j})C_{\sigma},$$ where $(a_{i,\sigma(i)}\cdots a_{\sigma^{-1}(j),j})$ is the cycle track missing $a_{j,i}$,  and $C_{\sigma}$ is the product of the cycle tracks in $\sigma$ that do not include $i$ and $j$.

\end{df}

\begin{df}  For $det(A)$  invertible, we say that $B$ is a \textbf{quasi-inverse} of $A$ over $R$ if $AB=I_G$ and~$BA= I'_G$ where $I_G,I'_G$ are quasi-identities.\end{df}

\begin{lem}$ $

(i) $A^{\nabla}$ is a quasi-inverse of $A$.

(ii) $A$ is a quasi-inverse of $A^{\nabla}$.
\end{lem}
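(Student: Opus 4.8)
The plan is to prove (i) directly from the supertropical adjoint identities established in the earlier papers cited in the excerpt (\cite{STMA}, \cite{STLA}), and then to deduce (ii) as a consequence of (i) together with the symmetry of the relevant construction. First I would recall the identity $A\,\mathrm{adj}(A) \vDash_{gs} \det(A)\,I$ and $\mathrm{adj}(A)\,A \vDash_{gs} \det(A)\,I$, valid for any square matrix over $R$; dividing by the invertible scalar $\det(A)$ gives $A A^{\nabla} \vDash_{gs} I$ and $A^{\nabla} A \vDash_{gs} I$. The point then is to identify the products $AA^{\nabla}$ and $A^{\nabla}A$ as quasi-identities, i.e. to check three things about $B = AA^{\nabla}$: that $B$ is nonsingular, that $B$ is multiplicatively idempotent, and that $B = I + Z_G$ for a quasi-zero $Z_G$. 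The last is the heart of the matter: the diagonal entries of $AA^{\nabla}$ are $\det(A)^{-1}\sum_k a_{i,k}\,\mathrm{adj}(A)_{k,i} = \det(A)^{-1}\det(A) = 1_R$ exactly on the diagonal (this is genuine equality, not merely $\vDash_{gs}$, by the Laplace-type expansion), while the off-diagonal entries $\det(A)^{-1}\sum_k a_{i,k}\,\mathrm{adj}(A)_{k,j}$ for $i\ne j$ are dominated by $\det(A)$ with the maximum attained at least twice — this is precisely the content of $AA^{\nabla}\vDash_{gs} I$ restricted to off-diagonal positions — hence these entries are ghost or $0_R$. Idempotence then follows from $B^2 = B$, which one gets by writing $B^2 = (AA^{\nabla})(AA^{\nabla}) = A(A^{\nabla}A)A^{\nabla}$ and using that $I'_G := A^{\nabla}A$ is itself a quasi-identity acting as a (one-sided) identity in the appropriate sense; alternatively, quasi-identities are known to be idempotent by their definition, so once $B = I + Z_G$ is nonsingular the idempotence is automatic from the structure theory in \cite{STLA}. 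Nonsingularity of $B$ is immediate since $\det(B) = \det(A)\det(A^{\nabla}) = \det(A)\det(\mathrm{adj}(A))\det(A)^{-n}$ and $\det(\mathrm{adj}(A)) = \det(A)^{n-1}$ up to $\nu$-equivalence (the supertropical Jacobi/adjoint identity), so $\det(B)\cong_\nu 1_R \in T$.

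For part (ii), I would observe that we must show $A^{\nabla}$ has a quasi-inverse, and the natural candidate is $A$ itself: we need $A^{\nabla}(A^{\nabla})^{\nabla} = I_G$ and $(A^{\nabla})^{\nabla}A^{\nabla} = I'_G$, but the cleaner route is to note that "being a quasi-inverse" is almost symmetric — from part (i) we already have $A^{\nabla}A = I'_G$ and $AA^{\nabla} = I_G$ with $I_G, I'_G$ quasi-identities, and the definition of quasi-inverse (Definition 5.4) is that $B$ is a quasi-inverse of $A$ when $AB = I_G$ and $BA = I'_G$ for quasi-identities. Reading that definition with the roles of $A$ and $A^{\nabla}$ swapped, $A$ is a quasi-inverse of $A^{\nabla}$ precisely when $A^{\nabla}A = $ (a quasi-identity) and $A A^{\nabla} = $ (a quasi-identity), which is exactly what (i) gives. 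So (ii) is a formal restatement of (i) once one checks that $\det(A^{\nabla})$ is invertible, which follows from $\det(A^{\nabla}) \cong_\nu \det(A)^{n-1}\det(A)^{-n} = \det(A)^{-1} \in T$.

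The main obstacle I anticipate is the off-diagonal analysis in part (i): one must show not merely $AA^{\nabla} \vDash_{gs} I$ (which is the surpassing relation from \cite[Theorem~3.5]{STMA} or its adjoint corollary) but the sharper statement that off-diagonal entries land in $G \cup \{0_R\}$ rather than in $T$. This requires unwinding the permutation-track description of $\mathrm{adj}(A)_{k,j} = \det(A_{j,k})$ given in Definition 5.2 and arguing that in the sum $\sum_k a_{i,k}\det(A_{j,k})$ the dominant value is achieved by at least two permutation tracks — one standard argument being that each term $a_{i,k}\det(A_{j,k})$ corresponds to permutations $\sigma$ with $\sigma(j) = k$ contributing $a_{i,k}\prod_{\ell\ne j}a_{\ell,\sigma(\ell)}$, and pairing each such permutation $\sigma$ with the permutation $\sigma'$ obtained by composing with the transposition that swaps the images of $i$ and $j$ produces a second track of equal weight, forcing a ghost. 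I would handle the boundary cases where such a pairing partner vanishes or coincides with the original (e.g. when $a_{i,k}=0_R$) separately, noting that then the corresponding diagonal-free contribution is $0_R$ and does not affect the maximum. Everything else — the diagonal computation, nonsingularity, and the deduction of idempotence and of part (ii) — is routine bookkeeping with the determinant identities already available.
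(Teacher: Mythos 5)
The paper itself offers no argument for this lemma; the stated proof is a single citation to \cite[Theorem~2.8]{STMA2}, so your proposal is attempting to supply a proof where the paper delegates. Your overall strategy is reasonable, and the off-diagonal analysis (each monomial in $\sum_k a_{i,k}\det(A_{j,k})$ occurs at least twice, via swapping the two columns that land in row $i$, hence the sum is ghost or $0_R$) is essentially the standard argument and is sound in spirit even if worded loosely. However, there are two genuine gaps.

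First, the idempotence of $B=AA^{\nabla}$. You write that it is ``automatic from the structure theory'' once $B=I+Z_G$ is nonsingular, but this is false: $(I+Z_G)^2 = I + Z_G + Z_G^2$ (using $Z_G+Z_G=Z_G$ since $Z_G$ is ghost), so idempotence requires $Z_G^2 \leq Z_G$ entrywise, which is not a formal consequence of $Z_G$ being quasi-zero and $I+Z_G$ being nonsingular. Your alternative route, $B^2 = A(A^{\nabla}A)A^{\nabla}$ ``using that $I'_G=A^{\nabla}A$ acts as a one-sided identity,'' also does not close: quasi-identities in the supertropical setting do not in general act as genuine identities (one only has $I'_G C \vDash_{gs} C$, not equality), so $A(A^{\nabla}A)A^{\nabla} = AA^{\nabla}$ still needs an argument. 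The idempotence is the real content of the cited external theorem and your proposal does not actually establish it.

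Second, the nonsingularity of $AA^{\nabla}$ and the invertibility of $\det(A^{\nabla})$ used in (ii). You deduce these from $\det(AA^{\nabla})\cong_{\nu}1_R$ and $\det(A^{\nabla})\cong_{\nu}\det(A)^{-1}$. But $\nu$-equivalence to a tangible element does not force tangibility: $1_R^{\nu}\cong_{\nu}1_R$ yet $1_R^{\nu}\in G$, so $\det(AA^{\nabla})\cong_{\nu}1_R$ is compatible with $AA^{\nabla}$ being singular. You need the exact (not merely $\nu$-equivalent) identities $\det(A\,\mathrm{adj}(A)) = \det(A)^n$ and $\det(\mathrm{adj}(A)) = \det(A)^{n-1}$, i.e.\ precisely the statements that appear as Theorem~5.5 (cited from \cite[Theorem~4.9]{STMA}); without appealing to those exact equalities, neither the nonsingularity of $AA^{\nabla}$ nor the well-definedness required for part~(ii) is secured.
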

 \begin{proof} See ~\cite[Theorem 2.8]{STMA2} \end{proof}

\noindent We denote $I_A = AA^{\nabla}$ and $I'_A=A^{\nabla}A$, which are quasi-identity matrices. 
\vskip 0.5 truecm

Notice that for $A$ in normal form $A^{\nabla}=\frac{adj(A)}{det(A)}=adj(A)$, which is also in normal form: the diagonal entries in~$adj(A)$ are  sums of cycle tracks of $A$, thus the Id summand $1_R$ dominates every diagonal entry. Also, $$1_R=det(I_A)=det(AA^{\nabla})=det(A)det(A^{\nabla})=det(A^{\nabla})\text{ (since }det(AA^{\nabla})\in T),$$ as required for  normal form. 

\begin{thm} $ $

(i) $det(A\cdot adj(A))= det(A)^n$ .

(ii) $det(adj(A))= det(A)^{n-1}$ .
\end{thm}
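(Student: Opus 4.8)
The plan is to prove both parts together using the multiplicativity of the supertropical determinant on the nose for the relevant products, together with the quasi-inverse structure already established. Recall from Lemma 5.4 that $A\cdot A^{\nabla}=I_A$ is a quasi-identity, and from the discussion immediately preceding the theorem that $\det(I_A)=1_R$; more generally, for a non-singular $A$ over $R$ (or a not strictly singular $A$ over $\mathbb{T}$) we have $\det(A\cdot\operatorname{adj}(A))=\det(A)^n\det(A^{\nabla}\cdot A^{\nabla\,\text{-free version}})$... — rather, the cleanest route is: write $A\cdot\operatorname{adj}(A)=\det(A)\cdot I_A$ when $\det(A)$ is invertible, since $\operatorname{adj}(A)=\det(A)\,A^{\nabla}$ and $A\cdot A^{\nabla}=I_A$. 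Then $\det(A\cdot\operatorname{adj}(A))=\det(\det(A)\cdot I_A)=\det(A)^n\det(I_A)=\det(A)^n$, because scaling an $n\times n$ matrix by a scalar $c$ multiplies each of the $n!$ tracks by $c^n$, hence multiplies the determinant (the tangible-dominant max over tracks) by $c^n$, and $\det(I_A)=1_R=0$ additively. This gives part (i).

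For part (ii), I would combine part (i) with the supertropical determinant product rule $\det(XY)\vDash_{gs}\det(X)\det(Y)$ from \cite[Theorem 3.5]{STMA}. Applying it to $X=A$, $Y=\operatorname{adj}(A)$ gives $\det(A)^n=\det(A\cdot\operatorname{adj}(A))\vDash_{gs}\det(A)\det(\operatorname{adj}(A))$. Since $A$ is non-singular over $R$, $\det(A)\in T$ is invertible, so $\det(A)^{n-1}$ is tangible and we may "cancel" it: the surpassing relation $\det(A)^n\vDash_{gs}\det(A)\det(\operatorname{adj}(A))$ together with tangibility of the left side forces equality there (a ghost cannot ghost-surpass a tangible strictly — if $c\vDash_{gs}d$ with $c$ tangible then $c=d$), hence multiplying through by $\det(A)^{-(n-1)}$ yields $\det(\operatorname{adj}(A))=\det(A)^{n-1}$. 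Over $\mathbb{T}$ for not strictly singular $A$ the determinant is likewise invertible in $T=\mathcal{G}$, so the same cancellation applies, with the understanding that $\vDash_{gs}$ collapses to $=$ there.

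Alternatively, and perhaps more transparently, part (ii) can be derived directly: every track of $\operatorname{adj}(A)$ is, by the expansion in Definition 5.2, a product of $n$ minors $\det(A_{j,\sigma^{-1}(j)})$ indexed along a permutation $\sigma$, and each such minor is a sum of $(n-1)!$ products of $n-1$ entries; expanding, the dominant contribution to $\det(\operatorname{adj}(A))$ is realized by choosing, in each minor, the sub-track that together with the fixed entry reconstitutes the dominant permutation track of $A$, giving $\det(A)^{n-1}$ as a lower bound, while the product rule gives the matching upper bound. I would present the short argument via part (i) and the product rule as the main line and mention this combinatorial picture only as motivation.

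The main obstacle is the scalar-extraction and cancellation step: one must be careful that $\det(c\cdot M)=c^n\det(M)$ holds in the supertropical determinant (true since every one of the $n!$ tracks picks up exactly $c^n=nc$ additively, so the max shifts by $nc$ and tangibility is preserved when $c\in T$), and that canceling $\det(A)^{n-1}$ across $\vDash_{gs}$ is legitimate precisely because $\det(A)$ is invertible — this is exactly where non-singularity over $R$ (or not-strictly-singular over $\mathbb{T}$) enters, and it is implicitly assumed throughout this section since $A^{\nabla}$ is only defined in that case. I would state this invertibility hypothesis explicitly at the start of the proof.
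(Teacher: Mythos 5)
Your proof is correct for non-singular $A$ over $R$ (and, with the usual collapse of $\vDash_{gs}$ to equality, for not-strictly-singular $A$ over $\mathbb{T}$), and it is genuinely different from the paper's treatment: the paper offers no argument at all here, simply citing \cite[Theorem~4.9]{STMA}, whereas you derive both identities from Lemma~5.4 (the quasi-inverse property $AA^{\nabla}=I_A$) together with the scalar rule $\det(cM)=c^n\det(M)$, $\det(I_A)=1_R$, and the one-sided determinant rule $\det(XY)\vDash_{gs}\det(X)\det(Y)$ with tangibility forcing equality. As a derivation \emph{inside} the supertropical framework your argument is clean and short, and it usefully isolates exactly where invertibility of $\det(A)$ enters (both to form $A^{\nabla}$ and to cancel $\det(A)^{n-1}$). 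What it does not buy you is the full generality of the cited theorem: \cite[Theorem~4.9]{STMA} is proved combinatorially for \emph{all} square matrices, including singular ones, where $A^{\nabla}$ is undefined and the tangibility cancellation breaks down; your proof only covers the case needed in this paper. You should also flag a potential circularity: Lemma~5.4 is itself imported from \cite[Theorem~2.8]{STMA2}, and the standard proofs of the quasi-inverse identities in that literature rest on the adjoint determinant formulas you are trying to prove, so your argument works as a consistency check within the framework rather than as an independent first-principles proof. Finally, one small caution on your justification of $\det(I_A)=1_R$: this is not read off from the discussion preceding the theorem (which already presupposes the result) but from the definition of a quasi-identity, namely that it is nonsingular with $1_R$ on the diagonal and ghosts or $0_R$ off the diagonal, so the Id track is the unique tangible dominant track.
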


\noindent \textbf{Proof.} ~\cite[Theorem 4.9]{STMA}.

\begin{pro} $ adj(AB) \vDash_{gs} adj(B) adj(A)$.\end{pro}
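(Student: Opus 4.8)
The plan is to prove the adjoint inequality by reducing it to the determinant inequality already established in \cite[Theorem 3.5]{STMA}, namely $\det(XY)\vDash_{gs}\det(X)\det(Y)$, applied to suitable minors. Recall that by Definition~5.2 we have $adj(AB)_{i,j}=\det((AB)_{j,i})$ and $(adj(B)adj(A))_{i,j}=\sum_{k}\det(B_{k,i})\det(A_{j,k})$. The key observation is the \textbf{Cauchy--Binet flavored identity} for the minor of a product: the $(j,i)$-minor $(AB)_{j,i}$ is the product of the matrix $A$ with row $j$ deleted and the matrix $B$ with column $i$ deleted. Expanding $\det$ of this $(n-1)\times n$ times $n\times(n-1)$ product by the supertropical generalization of Cauchy--Binet (or, more elementarily, by expanding over which column of the intermediate index is omitted), one gets that $\det((AB)_{j,i})$ ghost-surpasses the sum over $k$ of $\det(A^{(j;k)})\det(B^{(k;i)})$, where $A^{(j;k)}$ is $A$ with row $j$ and column $k$ deleted, and $B^{(k;i)}$ is $B$ with row $k$ and column $i$ deleted. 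But $\det(A^{(j;k)})=adj(A)_{k,j}$ and $\det(B^{(k;i)})=adj(B)_{i,k}$ up to the sign conventions, which in the supertropical (permanent) setting carry no sign, so the right-hand side is exactly $(adj(B)adj(A))_{i,j}$.

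So the concrete steps I would carry out are: (1) fix $i,j$ and write out $adj(AB)_{i,j}=\det((AB)_{j,i})$ as a sum of tracks of the $(n-1)\times(n-1)$ matrix $(AB)_{j,i}$, whose $(s,t)$ entry (for $s\neq j$, $t\neq i$) equals $\sum_{k=1}^n a_{s,k}b_{k,t}$; (2) expand each track $\prod_{s\neq j}\bigl(\sum_k a_{s,\tau(s)}... \bigr)$ — more precisely distribute the product of sums to obtain a sum over all functions assigning to each remaining row an intermediate index; (3) group these terms according to the multiset of intermediate indices used, and observe that the dominant contributions come from the terms where the intermediate indices are used injectively (i.e.\ they form, together with $j\mapsto k$ for the ``missing'' index $k$, a permutation), exactly as in the proof of $\det(AB)\vDash_{gs}\det(A)\det(B)$; (4) collect these to recognize $\sum_k \det(A_{j,k})\det(B_{k,i})=(adj(B)adj(A))_{i,j}$; (5) conclude that either the two sides are $\nu$-equivalent, or a repeated-index term dominates and the left side is a ghost strictly exceeding the right, which is precisely $\vDash_{gs}$.

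The main obstacle is step (3): making the bookkeeping of ``which intermediate indices repeat'' rigorous without recapitulating the entire proof of the product rule for determinants. The cleanest route is probably to avoid re-deriving it and instead invoke a supertropical Cauchy--Binet statement directly if one is available in \cite{STMA} or \cite{STMA2}; absent that, I would phrase the minor $(AB)_{j,i}$ as $A'B'$ where $A'$ is $n-1$ rows of $A$ (all but row $j$) and $B'$ is $n-1$ columns of $B$ (all but column $i$), and prove the rectangular Cauchy--Binet surpassing relation $\det(A'B')\vDash_{gs}\sum_{|S|=n-1}\det(A'_{*,S})\det(B'_{S,*})$ by the same track-collision argument used for square matrices — this is the technical heart. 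Finally I would note the degenerate cases: if $\det(A)$ or $\det(B)$ is $0_R$ the statement is either trivial or still follows since ghost-surpassing is reflexive and $0_R$ is the additive identity, and if $adj(AB)_{i,j}=0_R$ then every term on the right must also vanish, so the relation holds.

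\begin{proof}
Fix indices $i,j$. By Definition~5.2,
\[
adj(AB)_{i,j}=\det\bigl((AB)_{j,i}\bigr),\qquad
\bigl(adj(B)\,adj(A)\bigr)_{i,j}=\sum_{k=1}^{n}\det(B_{k,i})\,\det(A_{j,k}).
\]
Write $A'$ for the $(n-1)\times n$ matrix obtained from $A$ by deleting row $j$, and $B'$ for the $n\times(n-1)$ matrix obtained from $B$ by deleting column $i$; then $(AB)_{j,i}=A'B'$. Each entry of $A'B'$ is $(A'B')_{s,t}=\sum_{k=1}^{n}a'_{s,k}b'_{k,t}$, and $\det(A'B')$ is the sum over bijections $\tau$ between the $n-1$ retained rows and the $n-1$ retained columns of $\prod_{s}\bigl(\sum_{k}a'_{s,k}b'_{k,\tau(s)}\bigr)$. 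Distributing, $\det(A'B')$ is a sum of terms indexed by $(\tau,\varphi)$, where $\varphi$ assigns to each retained row $s$ an intermediate index $\varphi(s)\in\{1,\dots,n\}$, and the term equals $\prod_s a'_{s,\varphi(s)}\,b'_{\varphi(s),\tau(s)}$.

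Exactly as in the proof of $\det(XY)\vDash_{gs}\det(X)\det(Y)$ (\cite[Theorem 3.5]{STMA}), the terms for which $\varphi$ is not injective cannot strictly dominate, because any dominant tangible contribution must use the intermediate indices without repetition; when $\varphi$ is injective its image is $\{1,\dots,n\}\setminus\{k\}$ for a unique $k$, and grouping the injective terms by this omitted index $k$ and by $\tau$ yields precisely $\det(A'_{*,\,\widehat{k}})\,\det(B'_{\widehat{k},*})=\det(A_{j,k})\,\det(B_{k,i})$. Hence the sum of the injective terms equals $\sum_{k}\det(A_{j,k})\det(B_{k,i})=\bigl(adj(B)\,adj(A)\bigr)_{i,j}$. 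Since $\det(A'B')$ is this sum plus possibly additional (non-injective) terms, and since adding a term either leaves the value unchanged or raises it to a strictly larger ghost, we conclude $adj(AB)_{i,j}=\det(A'B')\vDash_{gs}\bigl(adj(B)\,adj(A)\bigr)_{i,j}$. As this holds for every $i,j$, we obtain $adj(AB)\vDash_{gs}adj(B)\,adj(A)$.
\end{proof}
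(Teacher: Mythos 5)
The paper's proof of this proposition is simply a citation to \cite[Proposition 4.8]{STMA}, so your argument supplies the derivation the paper does not spell out, and it is essentially correct. Your Cauchy--Binet reduction is sound: identifying $(AB)_{j,i}$ with $A'B'$, where $A'$ drops row $j$ of $A$ and $B'$ drops column $i$ of $B$, the injective-$\varphi$ contributions of $\det(A'B')$ regroup by the omitted intermediate index $k$ into $\sum_k\det(A_{j,k})\det(B_{k,i})=(adj(B)\,adj(A))_{i,j}$, and the non-injective ones pair off into equal summands under the fixed-point-free involution $\tau\mapsto\tau\circ(s_1\,s_2)$, exactly as in the proof of $\det(XY)\vDash_{gs}\det(X)\det(Y)$. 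The one place to tighten is the closing sentence: in the supertropical semiring it is \emph{not} true that ``adding a term either leaves the value unchanged or raises it to a strictly larger ghost'' --- if $b$ is tangible with $b^\nu>a^\nu$, then $a+b=b$ is tangible and does not ghost-surpass $a$. What you actually need, and have effectively established via the pairing, is that the aggregate of all non-injective $(\varphi,\tau)$ contributions is itself a ghost (or $0_R$), so that $adj(AB)_{i,j}=(adj(B)\,adj(A))_{i,j}+g$ with $g$ ghost; the correct final step is then that $a+g\vDash_{gs}a$ whenever $g$ is a ghost.
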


\noindent \textbf{Proof.} ~\cite[Proposition 4.8]{STMA}.

\begin{lem}
 
$ $

\noindent (i) $P^\nabla =P^{-1}$ whenever~$P$ is an invertible matrix. 

\noindent (ii) $(PA)^\nabla =A^\nabla P^\nabla$ where~$det(A)$ is invertible and~$P$ is an invertible matrix.

\noindent (iii) Let~$\bar{A}$ be the normal form of the matrix $A$ (i.e.~$A=P\bar{A}$ where~$P$ is the invertible matrix that normalizes a dominant permutation track of~$A$ to the diagonal). 

\noindent Then~$A^\nabla =\bar{A}^{\nabla}P^{-1}$.
\end{lem}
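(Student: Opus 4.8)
The plan is to prove the three parts of Lemma 5.9 in order, since each builds on the previous. For part (i), I would recall that an invertible matrix $P$ is, by Remark 2.7, a product $D_\pi$ of a permutation matrix and a diagonal matrix, so $\det(P)$ is tangible and $P^\nabla = \frac{\operatorname{adj}(P)}{\det(P)}$ is genuinely defined. The cleanest argument is: Lemma 5.4 says $P^\nabla$ is a quasi-inverse of $P$, so $PP^\nabla = I_G$ and $P^\nabla P = I'_G$ for quasi-identities $I_G, I'_G$; but $P$ is honestly invertible with two-sided inverse $P^{-1}$, and a quasi-identity that equals $P P^\nabla$ with $P$ invertible must actually be $I$ (one can see directly from $D_\pi$ that $\operatorname{adj}(D_\pi)/\det(D_\pi) = D_\pi^{-1}$ with no ghost terms, since each relevant minor has a unique dominant — indeed unique nonzero — track). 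Hence $P^\nabla = P^{-1} P P^\nabla = P^{-1} I = P^{-1}$. Alternatively, and perhaps more transparently, just compute $\operatorname{adj}(D_\pi)$ entrywise: the $(i,j)$ entry of $\operatorname{adj}(D_\pi)$ is $\det((D_\pi)_{j,i})$, which is nonzero only when $j = \pi^{-1}$ applied appropriately, giving exactly the transpose-permutation structure scaled so that dividing by $\det(D_\pi)$ yields $D_\pi^{-1}$.

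For part (ii), I would apply Proposition 5.7, which gives $\operatorname{adj}(PA) \vDash_{gs} \operatorname{adj}(A)\operatorname{adj}(P)$, together with the reverse inequality. The key point is that when one factor is \emph{invertible}, Proposition 5.7 becomes an equality: multiplying $\operatorname{adj}(PA)$ by $\operatorname{adj}(P)^{-1} = \operatorname{adj}(P^{-1})$ (using part (i) and invertibility of $P$) on the right and comparing, or simply invoking that $\det(PA)$ is tangible so no ghost slack can appear. Concretely, $\operatorname{adj}(PA) \vDash_{gs} \operatorname{adj}(A)\operatorname{adj}(P)$ and also $\operatorname{adj}(A) = \operatorname{adj}(P^{-1}(PA)) \vDash_{gs} \operatorname{adj}(PA)\operatorname{adj}(P^{-1})$, whence multiplying the second on the right by $\operatorname{adj}(P)$ and chaining gives $\operatorname{adj}(A)\operatorname{adj}(P) \vDash_{gs} \operatorname{adj}(PA) \vDash_{gs} \operatorname{adj}(A)\operatorname{adj}(P)$, forcing equality. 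Then $(PA)^\nabla = \frac{\operatorname{adj}(PA)}{\det(PA)} = \frac{\operatorname{adj}(A)\operatorname{adj}(P)}{\det(A)\det(P)} = \frac{\operatorname{adj}(A)}{\det(A)}\cdot\frac{\operatorname{adj}(P)}{\det(P)} = A^\nabla P^\nabla$.

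Part (iii) is then immediate: write $A = P\bar A$ with $P$ the invertible normalizing matrix, apply part (ii) to get $A^\nabla = (P\bar A)^\nabla = \bar A^\nabla P^\nabla$, and then substitute $P^\nabla = P^{-1}$ from part (i) to conclude $A^\nabla = \bar A^\nabla P^{-1}$.

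The main obstacle I anticipate is part (ii): pinning down why the ghost-surpassing relation of Proposition 5.7 collapses to genuine equality here. The subtlety is that $\vDash_{gs}$ is not symmetric, so one cannot naively "apply it twice and cancel" without justifying that the intermediate products behave well — in particular one must be careful that $\operatorname{adj}(P^{-1})$ really is a two-sided multiplicative inverse of $\operatorname{adj}(P)$ (which is where part (i) and the invertibility of $P$ genuinely get used, since $\operatorname{adj}$ is only antimultiplicative up to ghosts in general but is exactly antimultiplicative on invertible matrices). Once that is established the division step to pass from $\operatorname{adj}$ to $\nabla$ is routine, using that $\det(P)$ and $\det(A)$ are tangible so the fractions split. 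I would also remark that parts (i)–(ii) hold verbatim over $\mathbb{T}$ for not strictly singular $A$, since there $\nabla$ is everywhere defined and $\nu$-equivalence is equality.
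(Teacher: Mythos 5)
Your proof is correct and arrives at the same conclusions, but for part (ii) you take a genuinely different route from the paper. The paper proves $(PA)^\nabla = A^\nabla P^\nabla$ by direct calculation: writing $P=DP_\pi$ so that $PA=(d_i a_{i,\pi(j)})$, expanding the adjoint cofactors of $PA$ entrywise, observing that each cofactor picks up the product $\prod_{k\ne\pi(j)}d_k$, and then peeling off $D^\nabla$ and $P_\pi^\nabla$ from that explicit sum. You instead use the surpassing relation $\operatorname{adj}(XY)\vDash_{gs}\operatorname{adj}(Y)\operatorname{adj}(X)$ (Proposition~5.6, not~5.7 — minor numbering slip) to sandwich $\operatorname{adj}(PA)$ from both sides, exploiting that $\operatorname{adj}(P^{-1})$ is a genuine two-sided inverse of $\operatorname{adj}(P)$, and that mutual ghost-surpassing forces equality. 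This is more conceptual and shorter than the paper's computation, and you correctly identify the crux (that $\operatorname{adj}$ is exactly antimultiplicative on invertible matrices, not just up to ghosts). The tradeoff is that your argument leans on two background facts you don't fully flag: that $\vDash_{gs}$ on matrices is preserved under right multiplication by an arbitrary matrix (needed to pass from $\operatorname{adj}(A)\vDash_{gs}\operatorname{adj}(PA)\operatorname{adj}(P^{-1})$ to $\operatorname{adj}(A)\operatorname{adj}(P)\vDash_{gs}\operatorname{adj}(PA)$), and that $\det(PA)=\det(P)\det(A)$ when $P$ is invertible — the latter is needed in your final division step and is not automatic, since $\det$ is only submultiplicative up to ghosts; the paper derives it via its own surpassing sandwich $\det(A)=\det(P^{-1}PA)\vDash_{gs}\det(P)^{-1}\det(PA)\vDash_{gs}\det(A)$. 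Both facts are true and are consistent with the framework, so this is a matter of completeness rather than a genuine gap. For part (i) your direct computation of $\operatorname{adj}(D_\pi)$ matches the paper; the ``quasi-inverse plus invertibility'' framing you offer as an alternative actually still relies on that direct computation to see that the quasi-identity collapses to $I$, so it is not really an independent route. Part (iii) is identical.
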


\begin{proof} 
$ $

\noindent (i) Let $P$ be an invertible matrix of order $n$. According to Remark 2.7 $P$ is a product of a diagonal matrix $D=(d_i)$ (i.e. has $d_i$ in the $i,i$ position, and $0_R$ otherwise), and a permutation matrix $P_{\pi}=\sum_{i=1}^n e_{i,\pi(i)}$, where $e_{i,j}$ is the matrix with $1_R$ in the $i,j$ position and $0_R$ otherwise. Therefore$$P=DP_{\pi}=\sum_{i=1}^n d_ie_{i,\pi(i)}.$$ By definition $$P^{\nabla}=\frac{1}{\prod_{i=1}^n d_i}\sum_{i=1}^n \left(\prod_{j\ne i}d_je_{\pi(i),i}\right),$$ and we can conclude $P^{\nabla}=P^{-1}$ from $$PP^{\nabla}=P^{\nabla}P=\sum_{i=1}^n e_{i,i}=I.$$ 

\noindent (ii) Using part (i) and the well known fact that~$(AB)^{-1}=B^{-1}A^{-1}$, for invertible matrices~$A,B$, we get $$(DP_{\pi})^{\nabla}=P^{\nabla}=P^{-1}=(DP_{\pi})^{-1}=P_{\pi}^{-1}D^{-1}=P_{\pi}^{\nabla}D^{\nabla},$$ where $D=(d_i)$ is a diagonal matrix, $P_{\pi}=\sum_{i=1}^n e_{i,\pi(i)}$ is a permutation matrix, and $P$ is the invertible matrix composed by $D$ and $P_{\pi}$. 

We denote $A$ as $(a_{i,j})$ and show how $P$ acts on $A$;
$$PA=DP_{\pi}(a_{i,j})=D(a_{i,\pi(j)})=(d_ia_{i,\pi(j)}).$$ Since $P$ is invertible we have $$det(A)=det(P^{-1}PA)\vDash_{gs}det(P^{-1})det(PA)\vDash_{gs} det(P)^{-1}det(P)det(A)=det(A),$$ which means  $$det(P^{-1})det(PA)=det(P)^{-1}det(P)det(A).$$ Thus $det(PA)=det(P)det(A)$ and we get
$$(PA)^\nabla=\frac{1}{\prod d_i\cdot det(A)}\left(\sum_{\sigma(\pi(j))=i} (d_ia_{i,\sigma(i)}\cdots d_{\sigma^{-1}(\pi(j))}a_{\sigma^{-1}(\pi(j)),\pi(j)})C_{\sigma}\right).$$
Each summand in  the  numerator   includes $d_k$ for every $k\ne \pi(j)$, therefore $$(PA)^\nabla=\frac{1}{\prod d_i\cdot det(A)}\left(\sum_{\sigma(\pi(j))=i} \prod_{k\ne\pi(j)} d_k(a_{i,\sigma(i)}\cdots a_{\sigma^{-1}(\pi(j)),\pi(j)})C_{\sigma}\right)$$$$=\frac{1}{ det(A)}\left(\sum_{\sigma(\pi(j))=i} (a_{i,\sigma(i)}\cdots a_{\sigma^{-1}(\pi(j)),\pi(j)})C_{\sigma}\right)D^\nabla$$$$=\frac{1}{ det(A)}\left(\sum_{\sigma(j)=i} (a_{i,\sigma(i)}\cdots a_{\sigma^{-1}(j),j})C_{\sigma}\right)P_{\pi}^\nabla D^\nabla=A^\nabla P^\nabla.$$

\noindent (iii) Using the arguments in (i) and (ii) we have $$A^\nabla=(P\bar{A})^\nabla=\bar{A}^\nabla P^\nabla=\bar{A}^\nabla P^{-1}$$ as required.

\end{proof}

This last result allows us to approach the factorization in two stages, preserving the \textgravedbl well-behaved\textacutedbl $ $ invertible part, consist of elementary matrices~of type $1$ and $2$, and aspire to obtain factorization of the remaining, quasi-invertible part, including elementary matrices~of type 3.  

In fact, in his work, Buchholz salvage  a positive answer for the multiplicativity of the tropicalization where $A$ or $B$ are permutation or diagonal matrices. This result, in the post-valuation setting, will pass the factorization problem to the non-invertible matrices, the normal forms. 

The following lemma describes the solution by passing to matrices that are, in a supertropical way, equivalent to the original ones.

\begin{lem}The following assertions hold for any $3\times 3$ matrix $A$ in normal form. 

\noindent 1. $A^{\nabla}$ is the matrix of entry conditions defined in 4.4.

\noindent 2. $A^{\nabla}$ is always factorizable.

\noindent 3. $A^{\nabla\nabla}=A^{\nabla}$. (Equality holds over $\mathbb{T}$, and is being interpreted over $R$ as $\cong_{\nu}$).

\noindent 4. $A^{\nabla\nabla}$ is always factorizable.
\end{lem}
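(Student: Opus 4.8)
The plan is to prove the four assertions in the order listed, since each builds on the previous one. For assertion~1, I would compute $adj(A)$ entrywise for a $3\times3$ matrix $A=(a_{i,j})$ in normal form. By Definition~5.2 the $(i,j)$ entry of $adj(A)$ is $det(A_{j,i})$, which is the sum of the two permutation tracks passing through position $(j,i)$ with that entry deleted. For a diagonal entry $i=j$, one of these tracks is the Id-track (giving $1_R$) and the other is a $2$-cycle track dominated by $1_R$ by Remark~4.1, so the diagonal of $adj(A)$ is $1_R$; this also re-confirms that $A^\nabla=adj(A)$ is in normal form, as already noted after Lemma~5.4. For an off-diagonal entry $i\neq j$, the two tracks through $(j,i)$ with $a_{j,i}$ deleted are exactly $a_{i,j}$ itself (from the transposition track) and $a_{i,k}a_{k,j}$ where $k$ is the third index (from the $3$-cycle track), by the same enumeration used in Lemma~4.3. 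Hence $(adj(A))_{i,j}=a_{i,j}+a_{i,k}a_{k,j}$, which is precisely $b_{i,j}$ from Definition~4.4. Since $det(A)=1_R$ here, $A^\nabla=adj(A)$ equals the matrix of entry conditions.

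For assertion~2, I would invoke Lemma~4.5: $A^\nabla$ is factorizable iff it has no nondiagonal permutation track all of whose entry conditions are $<$. Now $A^\nabla$ is itself in normal form, so I need to look at \emph{its} entry conditions. The key observation is that the off-diagonal entries of $A^\nabla$ are already the sums $a_{i,j}+a_{i,k}a_{k,j}$, so each satisfies $(A^\nabla)_{i,j}\vDash_{gs} a_{i,j}$ and $(A^\nabla)_{i,j}\vDash_{gs} a_{i,k}a_{k,j}$. From these inequalities one computes directly that the entry condition of $(A^\nabla)_{i,j}$ inside $A^\nabla$ — namely the comparison of $(A^\nabla)_{i,j}$ with $(A^\nabla)_{i,k}(A^\nabla)_{k,j}$ — always satisfies $\geq$. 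Indeed $(A^\nabla)_{i,k}(A^\nabla)_{k,j}$ contains the summand $(a_{i,k}a_{k,j})(a_{k,j}a_{j,k}\text{-type terms})$; expanding the product of the two entry-condition sums and using $a_{k,j}a_{j,k}<1_R$ (Remark~4.1) shows every cross term is dominated by a summand already present in $(A^\nabla)_{i,j}$. Thus no row of $A^\nabla$ has an entry condition $<$ at all, so \emph{a fortiori} no nondiagonal permutation track of $<$-conditions exists, and Lemma~4.5 gives factorizability. (I would double-check the edge case where some entry condition of $A^\nabla$ is $=$; this is harmless for Lemma~4.5, which only forbids a full track of strict $<$.)

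For assertion~3, I would use Lemma~5.5(iii)/(ii) together with Lemma~5.4: since $A$ is in normal form, $A^\nabla=adj(A)$, and applying $\nabla$ again, $A^{\nabla\nabla}=adj(adj(A))/det(adj(A))$; by Theorem~5.6(ii), $det(adj(A))=det(A)^{n-1}=1_R$, so $A^{\nabla\nabla}=adj(adj(A))$. The classical identity $adj(adj(A))\cong_\nu det(A)^{n-2}A$ (the supertropical analogue, valid up to ghost-surpassing/$\nu$-equivalence — this is where I expect the real work, and I would cite or reprove the relevant statement from \cite{STMA}/\cite{STMA2}) together with $det(A)=1_R$ yields $A^{\nabla\nabla}\cong_\nu A$. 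But one must be careful: $A^{\nabla\nabla}$ need not equal $A$ on the nose over $R$, only $\cong_\nu$; over $\mathbb{T}$, $\cong_\nu$ is equality, which is exactly what the parenthetical in the statement asserts. Alternatively, and more cheaply, I would argue directly: $(A^{\nabla\nabla})_{i,j}$ is the sum of $(A^\nabla)_{i,j}=a_{i,j}+a_{i,k}a_{k,j}$ and $(A^\nabla)_{i,k}(A^\nabla)_{k,j}=(a_{i,k}+a_{i,j}a_{j,k})(a_{k,j}+a_{k,i}a_{i,j})$, and expanding, every new summand ($a_{i,k}a_{k,i}a_{i,j}$, $a_{i,j}a_{j,k}a_{k,j}$, $a_{i,j}a_{j,k}a_{k,i}a_{i,j}$) is dominated by $a_{i,j}$ using Remark~4.1 twice; hence $(A^{\nabla\nabla})_{i,j}\cong_\nu (A^\nabla)_{i,j}$, i.e. $A^{\nabla\nabla}\cong_\nu A^{\nabla}$. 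This is the self-contained route I would actually take in the write-up.

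Finally, assertion~4 is immediate: by~3, $A^{\nabla\nabla}\cong_\nu A^\nabla$ (equality over $\mathbb{T}$), and by~2, $A^\nabla$ is factorizable; since factorizability depends only on the $\nu$-values of the entries (the whole factorization machinery of \S3--\S4 is phrased via the ghost-surpassing order), $A^{\nabla\nabla}$ is factorizable as well — over $R$ this uses that $A^{\nabla\nabla}$ and $A^\nabla$ have the same tangible-versus-ghost structure in the relevant positions, which holds because both are in normal form with the same dominant diagonal. The main obstacle is assertion~3: pinning down the precise sense (equality vs.\ $\cong_\nu$, and over which structure) in which $A^{\nabla\nabla}$ reproduces $A^\nabla$, and making sure the $3\times3$ expansion genuinely collapses — this is why I favor the explicit entrywise computation, which sidesteps any appeal to a supertropical $adj(adj(A))$ identity whose exact hypotheses I would otherwise need to track down.
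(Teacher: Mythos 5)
Your proof is correct and follows essentially the same route as the paper: compute $adj(A)$ entrywise to identify it with the matrix of entry conditions, expand $b_{i,k}b_{k,j}=(a_{i,k}+a_{i,j}a_{j,k})(a_{k,j}+a_{k,i}a_{i,j})$ and dominate each cross term by $b_{i,j}$ using the fact (Remark~4.1) that cycle tracks in a normal form are bounded by $1_R$, conclude factorizability from Lemma~4.5, and get $A^{\nabla\nabla}=A^\nabla$ (over $\mathbb{T}$, $\cong_\nu$ over $R$) and its factorizability for free. One caveat: the ``classical identity'' $adj(adj(A))\cong_\nu \det(A)^{n-2}A$ that you float as a first option for assertion~3 is not true in the tropical or supertropical setting --- the lack of negation precisely destroys it, and indeed if it held it would force $A^\nabla\cong_\nu A$ for every normal form, contradicting cases where an entry condition is $<$ (e.g.\ Example~4.7). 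You correctly fall back to the explicit entrywise expansion, which is the argument the paper actually uses; just be aware that the discarded route isn't merely ``more work'' but genuinely false here, and also note that it would land on $A$ rather than on $A^\nabla$, which is not what assertion~3 claims.
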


\begin{proof} By hypothesis, writing~$A=(a_{i,j})$ where~$a_{i,j}=1_R$ for~$i=j$, we have~$$1_R\geq a_{i,j}a_{j,i}\text{ and }1_R\geq a_{i,k}a_{k,j}a_{j,i}$$ for every distinct~$i,j$ and~$k$.

\noindent 1. By definition of $A^{\nabla}=(a'_{i,j})$ and the matrix of entry conditions,$ (b_{i,j})$, we get:
$$a'_{i,j}=
\begin{cases}
1_R,\  i=j\\
\sum(a_{i,\sigma(i)}\cdots a_{\sigma^{-1}(j),j})C_{\sigma},\  i\ne j
\end{cases}=
\begin{cases}
1_R,\  i=j\\
a_{i,j}+a_{i,k}a_{k,j},\  i\ne j
\end{cases}=
b_{i,j}$$

\vskip 0.5 truecm

\noindent 2. According to Lemma 4.5 we can factor the matrix iff there exists a row with two entry conditions $\geq$. For every entry in $A^{\nabla}=(b_{i,j})$ we have $$b_{i,k}b_{k,j}=(a_{i,k}+a_{i,j}a_{j,k})(a_{k,j}+a_{k,i}a_{i,j})=a_{i,k}a_{k,j}+(a_{i,k}a_{k,i})a_{i,j}+a_{i,j}(a_{j,k}a_{k,j})+a_{i,j}(a_{j,k}a_{k,i}a_{i,j}).$$ Since every term in brackets is a cycle track we get $$b_{i,k}b_{k,j}\leq a_{i,j}+a_{i,k}a_{k,j}=b_{i,j}.$$

\vskip 0.5 truecm

\noindent 3. By definition
$$A^{\nabla\nabla}=(c_{i,j}),\text{ where }c_{i,j}=
\begin{cases}
1_R,\  i=j\\
b_{i,j}+b_{i,k}b_{k,j},\  i\ne j
\end{cases}=
b_{i,j}$$ 

\vskip 0.5 truecm

\normalsize

\noindent 4. Immediate from (2) and (3).
\end{proof}

In the next section we  generalize parts (2)-(4) of the last Lemma for not strictly singular $n\times n$ matrices.

\section{$n\times n$ Quasi-factorization}

Having established factorizability for the quasi-inverses of not strictly singular~$3\times 3$  matrices, we would like to obtain this result for $n\times n$ not strictly singular matrices. In order to do so we achieve the same equality over~$\mathbb{T}$ as in  Lemma 5.8- part 3, which  guarantees that the  ghost value of the entries of a matrix in normal form over $R$ are preserved under $\nabla$. In view of Lemma 5.7 we may assume that every not strictly singular matrix $A$ is in normal form.

\begin{cla} If $A$ is in normal form, then $A^\nabla A= A^\nabla= AA^\nabla$. (Equality holds over~$\mathbb{T}$, and is being interpreted over $R$ as $\cong_{\nu}$).\end{cla}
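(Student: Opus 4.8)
The plan is to prove the two-sided identity $A^\nabla A = A^\nabla = A A^\nabla$ for $A$ in normal form by a direct entrywise computation, exploiting the structural fact recorded in Remark~4.1: since the diagonal of $A$ is $1_R$ and every permutation track (hence every cycle track) is dominated by $1_R$, multiplying any term $M$ by a cycle track produces something dominated by $M$ (with equality possible over $\mathbb{T}$). First I would recall from the paragraph preceding Theorem~4.7 that for $A$ in normal form we have $A^\nabla = adj(A)$, and that its diagonal entries equal $1_R$ (the Id-summand dominates any sum of cycle tracks) while the off-diagonal entries $a'_{i,j} = det(A_{j,i})$ are the sums $\sum_{\sigma(j)=i}(a_{i,\sigma(i)}\cdots a_{\sigma^{-1}(j),j})C_\sigma$ of cycle-track-type products. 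So $A^\nabla$ is itself in normal form.

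Next I would compute the $(i,j)$ entry of the product $A^\nabla A$, namely $\sum_k a'_{i,k} a_{k,j}$. Splitting off the terms $k=i$ and $k=j$: the $k=i$ term contributes $a'_{i,i}a_{i,j} = 1_R \cdot a_{i,j} = a_{i,j}$, and the $k=j$ term contributes $a'_{i,j}a_{j,j} = a'_{i,j}\cdot 1_R = a'_{i,j}$, which already dominates $a_{i,j}$ since $a'_{i,j} = a_{i,j} + (\text{other tracks})$. For the remaining indices $k \notin \{i,j\}$, the term $a'_{i,k}a_{k,j} = det(A_{k,i})\, a_{k,j}$ is (a sum of) products of the form (cycle track through $i$, missing $a_{k,i}$)$\cdot C_\sigma \cdot a_{k,j}$; reassembling, each such summand is a product over a set of entries forming a path/cycle structure in which $a_{k,i}$ is replaced by the pair $a_{k,j}$ and the $j$-index slot — concretely it equals $a'_{i,j}$ multiplied by a cycle track of $A$, hence is dominated by $a'_{i,j}$ (with equality only possible over $\mathbb{T}$). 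Therefore $\sum_k a'_{i,k}a_{k,j} = a'_{i,j}$ over $\mathbb{T}$, and over $R$ its $\nu$-value equals $(a'_{i,j})^\nu$, i.e. $A^\nabla A \cong_\nu A^\nabla$. The diagonal entries of $A^\nabla A$ are $\sum_k a'_{i,k}a_{k,i}$, whose $k=i$ term is $1_R$ and whose other terms are cycle tracks dominated by $1_R$, giving $1_R$ on the diagonal, consistent with $A^\nabla$. The computation for $A A^\nabla = \sum_k a_{i,k} a'_{k,j}$ is symmetric, using $a_{i,i}=1_R$ and $a'_{j,j}=1_R$ together with the same domination-by-cycle-tracks argument.

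The main obstacle I anticipate is the bookkeeping in the generic step $k \notin \{i,j\}$: one must show cleanly that each monomial appearing in $det(A_{k,i})\,a_{k,j}$ is of the form $a'_{i,j}\cdot(\text{cycle track})$ and hence $\vDash_{gs}$-dominated by $a'_{i,j}$. This is exactly the kind of permutation-surgery argument used in the proof of Lemma~5.8 (where the $3\times 3$ case $A^{\nabla\nabla}=A^\nabla$ was handled by expanding $b_{i,k}b_{k,j}$ and recognizing cycle-track factors), so the right move is to phrase it via disjoint-cycle decomposition: a permutation $\tau$ with $\tau(k)=i$ contributing to $det(A_{k,i})$ together with the extra factor $a_{k,j}$ corresponds to a permutation $\sigma$ with $\sigma(j)=i$ (obtained by splicing $j \mapsto i$ into $\tau$'s cycle and opening a new cycle at $k$), and what is left over after factoring out the $\sigma$-cycle-track missing $a_{j,i}$ is precisely a product of full cycle tracks of $A$ — each $\leq 1_R$. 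I would present this once, carefully, and then invoke it for both $A^\nabla A$ and $A A^\nabla$. Finally I would note that the diagonal being $1_R$ plus the off-diagonal $\nu$-equality gives exactly $A^\nabla A \cong_\nu A^\nabla \cong_\nu A A^\nabla$, with genuine equality over $\mathbb{T}$, completing the claim.
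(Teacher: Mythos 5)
Your proposal is correct and follows essentially the same route as the paper: split the sum $\sum_k$ at $k=i$ and $k=j$, observe that those two terms already give $a'_{i,j}+a_{i,j}=a'_{i,j}$, and then for $k\notin\{i,j\}$ factor each monomial of $a'_{i,k}a_{k,j}$ into a summand of $a'_{i,j}$ times full cycle tracks of $A$ (each $\leq 1_R$) via the same disjoint-cycle case analysis the paper uses. The only cosmetic differences are that you lead with $A^\nabla A$ where the paper leads with $AA^\nabla$ (each deferring the other by symmetry), and your phrase ``equals $a'_{i,j}$ multiplied by a cycle track'' should more precisely read ``equals a \emph{summand} of $a'_{i,j}$ multiplied by cycle tracks,'' which you in fact state correctly in the final paragraph.
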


\begin{proof}

$AA^\nabla,\ A^\nabla A$ are quasi-identities and therefore equal to  $1_R$  on the diagonal, as is~$A^\nabla$. We check $AA^\nabla$ outside the diagonal. The proof for $A^\nabla A$ would be obtained analogously by exchanging $a_{i,k}a'_{k,j}$ by $a'_{i,k}a_{k,j}$, where $A=(a_{i,j}),\ A^\nabla =(a'_{i,j})=(det(A_{j,i}))$. 

 For $i\ne j$, the $i,j$ entry of $AA^\nabla$  is a sum of the form 

\begin{equation}\sum_{k=1}^n a_{i,k}a'_{k,j}=a_{i,i}a'_{i,j}+a_{i,j}a'_{j,j}+\sum_{k\ne i,j}a_{i,k}a'_{k,j}.\end{equation}

\noindent Since $A^\nabla, A$ are in normal form their diagonal entries are $1_R$, yielding

\begin{equation}a'_{i,j}+a_{i,j}+\sum_{k\ne i,j}a_{i,k}a'_{k,j}.\end{equation}

Clearly $A A^\nabla\geq A^\nabla,$ since $a'_{i,j}$ is a summand in the $i,j$ position of $A A^\nabla$. So it suffices to prove that $A^\nabla  \geq AA^\nabla$. Moreover, we saw $$a'_{i,j}=\sum_{\tiny{\begin{array}{cc}\pi\in S_n:\\\pi(j)=i\end{array}}}(a_{i,\pi(i)}\cdots a_{\pi^{-1}(j),j})C_{\pi}=a_{i,j}+\sum_{\tiny{\begin{array}{ccc}\pi\in S_n:\\\pi(j)=i\\\pi\ne(i\ j)\end{array}}}(a_{i,\pi(i)}\cdots a_{\pi^{-1}(j),j})C_{\pi},$$ therefore $a'_{i,j}+a_{i,j}$ in (6.2) is $a'_{i,j}$.
\vskip 0.25 truecm

\noindent By definition \begin{equation}a_{i,k}a'_{k,j}=a_{i,k}\sum_{\tiny{\begin{array}{cc}\sigma\in S_n:\\\sigma(j)=k\end{array}}}(a_{k,\sigma(k)}\cdots a_{\sigma^{-1}(j),j})C_{\sigma}\end{equation} and \begin{equation}a'_{i,j}=\sum_{\tiny{\begin{array}{cc}\pi\in S_n:\\\pi(j)=i\end{array}}}(a_{i,\pi(i)}\cdots a_{\pi^{-1}(j),j})C_{\pi}.\end{equation}

\noindent We will show that (6.4) is equal to or greater than every summand in (6.3) for every~$k$, by finding for every permutation $\sigma$ in (6.3) a permutation $\pi$ in (6.4) such that the summand of $\pi$ dominates the summand of $\sigma$.

\vskip 0.25 truecm

Considering Remark (4.1), it suffices to show we can factor each summand in (6.3) into a cycle track, which is smaller than $1_R$, and a summand in (6.4). 

For every $k\ne i,j$ and $\sigma \in S_n$ such that $\sigma(j)=k$ we look at $$a_{i,k}(a_{k,\sigma(k)}\cdots a_{\sigma^{-1}(j),j})C_{\sigma}$$ and distinguish between the two cases, whether or not  $i$ is in the same  cycle as $j$ in $\sigma$.

\noindent \underline{Case I:}   $i$ is in the same  cycle as $j$: \begin{equation}a_{i,k}(a_{k,\sigma(k)}\cdots a_{\sigma^{-1}(i),i}a_{i,\sigma(i)}\cdots a_{\sigma^{-1}(j),j})C_{\sigma},\end{equation} where $C_{\sigma}$ is the product of the remaining cycle tracks in $\sigma$. By factoring this expression into the following disjoint terms: $(a_{i,k}a_{k,\sigma(k)}\cdots a_{\sigma^{-1}(i),i})C_{\sigma}$ and $(a_{i,\sigma(i)}\cdots a_{\sigma^{-1}(j),j})$, and composing each with disjoint Id tracks, we obtain a permutation track and a summand in (6.4) respectively.

\noindent Consequently it is clear that these summands in (6.3) are  dominated by (6.4).

\vskip 0.25 truecm

\noindent \underline{Case II:}   $i$ is not in the same  cycle as $j$: \begin{equation}a_{i,k}(a_{k,\sigma(k)}\cdots a_{\sigma^{-1}(j),j})(a_{i,\sigma(i)}\cdots a_{\sigma^{-1}(i),i})C'_{\sigma},\end{equation}where $C'_{\sigma}$ is the product of the remaining  cycle tracks in $\sigma$. By factoring this expression into the disjoint terms:~$a_{i,k}(a_{k,\sigma(k)}\cdots a_{\sigma^{-1}(j),j})$ and $(a_{i,\sigma(i)}\cdots a_{\sigma^{-1}(i),i})C'_{\sigma}$ , and composing each with disjoint Id tracks, we obtain  a summand in~(6.4)  and  a permutation track respectively. Consequently it is clear that these summands in (6.3) are also dominated by (6.4).

\vskip 0.25 truecm

Hence, we get:
$$a'_{i,j}+a_{i,j}+\sum_{k\ne i,j}a_{i,k}a'_{k,j}=a'_{i,j}.$$

\end{proof}

\vskip 0.25 truecm

\begin{cor} Let $A$ be a matrix with  normal form $\bar{A}$, i.e. $A=P\bar{A}$ for some invertible matrix $P$. Then

a. $\bar{A}^{\nabla\nabla} = \bar{A}^\nabla$.

b. $A^{\nabla\nabla} = PA^\nabla P$.

\noindent (Equalities hold over $\mathbb{T}$, and are being interpreted over $R$ as $\cong_{\nu}$)
\end{cor}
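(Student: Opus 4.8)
The plan is to derive both parts of Corollary~6.3 directly from Claim~6.2, which asserts that for a matrix in normal form $\bar{A}^{\nabla}\bar{A}=\bar{A}^{\nabla}=\bar{A}\bar{A}^{\nabla}$ (over $\mathbb{T}$, and $\cong_{\nu}$ over $R$), together with Lemma~5.7(iii) relating $A^{\nabla}$ to $\bar{A}^{\nabla}$.

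For part~a, first recall that $\bar{A}^{\nabla}$ is itself in normal form. This was noted in the remark following Lemma~5.4: since $\bar{A}$ has $1_R$ on the diagonal, its adjoint's diagonal entries are sums of cycle tracks dominated by $1_R$, and $\det(\bar{A}^{\nabla})=\det(\bar{A})^{n-1}=1_R$ by Theorem~5.5(ii) applied in normal form. Hence Claim~6.2 applies to $\bar{A}^{\nabla}$ in place of $A$, giving $(\bar{A}^{\nabla})^{\nabla}\bar{A}^{\nabla}=(\bar{A}^{\nabla})^{\nabla}=\bar{A}^{\nabla}(\bar{A}^{\nabla})^{\nabla}$. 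The only thing one must still check is that $(\bar{A}^{\nabla})^{\nabla}$ really equals $\bar{A}^{\nabla\nabla}$ under the intended reading, i.e. that passing to the quasi-inverse of the normal form $\bar{A}^{\nabla}$ coincides with $\nabla$ applied twice to $\bar{A}$; this is immediate since $\bar{A}^{\nabla}$ is already normal so $\det(\bar{A}^{\nabla})=1_R$ and no extra normalizing permutation intervenes. Then, reading off the diagonal where everything is $1_R$ and the off-diagonal from the identity $\bar{A}^{\nabla}(\bar{A}^{\nabla})^{\nabla}=\bar{A}^{\nabla}$, we conclude $\bar{A}^{\nabla\nabla}=\bar{A}^{\nabla}$.

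For part~b, write $A=P\bar{A}$ with $P$ invertible normalizing a dominant permutation track. By Lemma~5.7(iii), $A^{\nabla}=\bar{A}^{\nabla}P^{-1}$. Applying $\nabla$ again and using Lemma~5.7(ii) (which says $(QB)^{\nabla}=B^{\nabla}Q^{\nabla}$ for $Q$ invertible and $\det(B)$ invertible) with $Q=\bar{A}^{\nabla}$... — more carefully, $\nabla$ of a product with an invertible matrix on the \emph{right}: from $(QB)^{\nabla}=B^{\nabla}Q^{\nabla}$ one gets, taking $B=P^{-1}$ invertible and $Q=\bar{A}^{\nabla}$, that $(\bar{A}^{\nabla}P^{-1})^{\nabla}=(P^{-1})^{\nabla}(\bar{A}^{\nabla})^{\nabla}=P\,\bar{A}^{\nabla\nabla}$ using Lemma~5.7(i). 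Now substitute part~a, $\bar{A}^{\nabla\nabla}=\bar{A}^{\nabla}$, to get $A^{\nabla\nabla}=P\bar{A}^{\nabla}$. Finally, express $\bar{A}^{\nabla}$ in terms of $A^{\nabla}$: from $A^{\nabla}=\bar{A}^{\nabla}P^{-1}$ we have $\bar{A}^{\nabla}=A^{\nabla}P$, hence $A^{\nabla\nabla}=P A^{\nabla}P$, as claimed.

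The main obstacle I anticipate is bookkeeping the $\nu$-equivalence rather than strict equality over $R$: Claim~6.2 and Lemma~5.7 hold as genuine equalities over $\mathbb{T}$ but only up to $\cong_{\nu}$ over $R$, and the composition of several such steps must be checked not to degrade the relation (it does not, since $\cong_{\nu}$ is preserved under the semiring operations and under $\nabla$, which is built from determinants of minors). A secondary subtlety is making sure that $\nabla$ applied to a product where the invertible factor sits on the right behaves as expected; this is handled by Lemma~5.7(ii)–(iii), but one should state explicitly that $(P^{-1})^{\nabla}=P$ and that $\det$ of the relevant matrices is invertible so that $\nabla$ is even defined at each stage. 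Once these points are dispatched, the argument is a short substitution chain.
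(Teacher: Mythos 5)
Your treatment of part~b matches the paper's route, but part~a contains a genuine circularity. From Claim~6.1 applied to $\bar{A}^{\nabla}$ (which is correctly observed to be in normal form) you obtain $\bar{A}^{\nabla}\bar{A}^{\nabla\nabla}=\bar{A}^{\nabla\nabla}=\bar{A}^{\nabla\nabla}\bar{A}^{\nabla}$, and you state this correctly. But the next step, ``reading off $\ldots$ from the identity $\bar{A}^{\nabla}(\bar{A}^{\nabla})^{\nabla}=\bar{A}^{\nabla}$,'' invokes an identity that Claim~6.1 does not provide: applied to $\bar{A}^{\nabla}$ it gives $\bar{A}^{\nabla}(\bar{A}^{\nabla})^{\nabla}=(\bar{A}^{\nabla})^{\nabla}$, not $\bar{A}^{\nabla}(\bar{A}^{\nabla})^{\nabla}=\bar{A}^{\nabla}$. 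The ``identity'' you write down is, when combined with the genuine consequence of Claim~6.1, logically equivalent to the statement you are trying to prove; you have assumed the conclusion. The paper closes exactly this gap by importing the separate fact $A^{\nabla}=A^{\nabla}A^{\nabla\nabla}A^{\nabla}$ from \cite[Corollary 4.4]{STMA2}: combined with Claim~6.1 for $\bar{A}^{\nabla}$ it gives
$\bar{A}^{\nabla}=\bar{A}^{\nabla}\bigl(\bar{A}^{\nabla\nabla}\bar{A}^{\nabla}\bigr)=\bar{A}^{\nabla}\bar{A}^{\nabla\nabla}=\bar{A}^{\nabla\nabla}$.
(An alternative repair is to use that $\bar{A}^{\nabla}=\bar{A}\bar{A}^{\nabla}=I_{\bar{A}}$ is a quasi-identity, hence multiplicatively idempotent, together with Lemma~6.7's $\bar{A}^{\nabla\nabla}=(\bar{A}^{\nabla})^{\,n-1}$; but Lemma~6.7 comes after the corollary in the paper, so the paper's choice is the cleaner one in context.)

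For part~b your substitution chain $A^{\nabla\nabla}=(\bar{A}^{\nabla}P^{-1})^{\nabla}=P\,\bar{A}^{\nabla\nabla}=P\bar{A}^{\nabla}=PA^{\nabla}P$ is the paper's argument. Note, however, that you misattribute the hypotheses of Lemma~5.7(ii): you take $Q=\bar{A}^{\nabla}$ in the ``$P$ invertible'' slot and $B=P^{-1}$ in the ``$\det(A)$ invertible'' slot, whereas the lemma requires the \emph{left} factor to be invertible; $\bar{A}^{\nabla}$ is not invertible. What one actually needs is the right-multiplication analogue $(BQ)^{\nabla}=Q^{-1}B^{\nabla}$ for $Q$ invertible, which follows from 5.7(ii) by transposing (using $\mathrm{adj}(B^{T})=\mathrm{adj}(B)^{T}$). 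The paper glides over this point as well, so you are in good company, but your explicit appeal to 5.7(ii) with the roles swapped is not a valid application of that lemma as stated and should be replaced by the transposed version.
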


\begin{proof}

$ $

a. According to ~\cite[Corollary 4.4]{STMA2} we know that $A^\nabla =A^\nabla A^{\nabla\nabla} A^\nabla$. By applying the last claim for $\bar{A}^{\nabla}$ we can conclude
$$\bar{A}^\nabla =\bar{A}^\nabla (\bar{A}^{\nabla\nabla} \bar{A}^\nabla) = \bar{A}^\nabla \bar{A}^{\nabla\nabla} = \bar{A}^{\nabla\nabla}.$$

b. According to Lemma 5.7 we have $A^{\nabla\nabla}=(P\bar{A})^{\nabla\nabla}=P\bar{A}^{\nabla\nabla}= P\bar{A}^{\nabla}=PA^\nabla P$.
\end{proof}

\vskip 0.25 truecm

\begin{df}
 The $\mathbf{(i_1,..., i_k)}$-\textbf{minor} $M_{i_1,...,i_k}$ of a matrix $A = (a_{i,j})$ is obtained by deleting the $i_1,...,i_k$ rows of $A$ and their corresponding columns.
\end{df}

\vskip 0.25 truecm
Let $\mathcal{E}$ denote the monoid of matrices that are factorizable over $\mathbb{T}$, and $\mathcal{E}_R$  the monoid of matrices that are factorizable over $R$ ($\mathcal{E}_R\subseteq \mathcal{E}$). We notice that unlike the classical case, $\mathcal{E}$ does not coincide with the group of invertible matrices, or with the non-singular matrices denoted by $\mathcal{R}$.

 For example 
$\left(
\begin{array}{ccc}
1_R &1_R & 0_R\\
1_R &  1_R&0_R\\
0_R &  0_R & 1_R
\end{array}
\right)\in\mathcal{E}\setminus \mathcal{R}\ $ and
$\ \left(
\begin{array}{ccc}
1_R &1_R & 0_R\\
0_R &  1_R&1_R\\
2_R &  0_R & 1_R
\end{array}
\right)\in \mathcal{R}\setminus\mathcal{E}.$

\vskip 0.5 truecm

\begin{pro}
Let $A$ be an $n\times n$ matrix in normal form. If an $n-1\times n-1$ minor~$M_{i_n}$ of $A$ is in~$\mathcal{E}$, then $A\in\mathcal{E}$  if \begin{equation}a_{i_n,j}\geq a_{i_n,k}a_{k,j}\ \forall i_n\ne j \ and\ \forall k\ne i_n,j.\end{equation} $A\in\mathcal{E}_R$ if the inequality in (6.7) is strict, or if $a_{i_n,j}$ is ghost in case of equality.
\end{pro}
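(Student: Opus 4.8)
The plan is to leverage the established minor‑factorization together with the column‑recovery trick already used in the $3\times 3$ case (Lemma 4.5) and generalized in the $n\times n$ quasi‑factorization section. Since $A$ is in normal form with $1_R$ on the diagonal, and $M_{i_n}\in\mathcal{E}$, I can first factor the $(n-1)\times(n-1)$ submatrix obtained by deleting row and column $i_n$, viewing it as embedded in the $n\times n$ identity on the index set $\{1,\dots,n\}\setminus\{i_n\}$. Next I recover the $i_n^{\text{th}}$ \emph{column} of $A$ above and below the diagonal by applying elementary matrices of type 3 of the form $E_{j+a_{j,i_n}\cdot(i_n^{\text{th}}\text{row})}$ for each $j\ne i_n$; because the $i_n,i_n$ entry is $1_R$ and the rest of row $i_n$ is still $0_R$ at this stage, these operations place exactly $a_{j,i_n}$ in position $(j,i_n)$ without disturbing anything already constructed. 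At this point every row and column of $A$ has been recovered except row $i_n$, whose off‑diagonal entries are still $0_R$.

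The heart of the argument is recovering row $i_n$ using a single sweep of elementary matrices $E_{i_n+a_{i_n,k}\cdot(k^{\text{th}}\text{row})}$, one for each $k\ne i_n$. Adding $a_{i_n,k}$ times row $k$ to row $i_n$ contributes $a_{i_n,k}$ into position $(i_n,k)$ — which is what we want there — but it also contributes $a_{i_n,k}a_{k,j}$ into position $(i_n,j)$ for every $j\ne k$. The hypothesis (6.7), $a_{i_n,j}\geq a_{i_n,k}a_{k,j}$, is exactly the condition guaranteeing that these parasitic contributions are absorbed: in the supertropical sum $a_{i_n,j}+\sum_{k}a_{i_n,k}a_{k,j}$ the desired entry $a_{i_n,j}$ dominates (note also $a_{i_n,j}a_{j,i_n}a_{i_n,j}<a_{i_n,j}$ handles the $k=j$ interaction via a cycle track, using normal form). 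I would carry out the sweep of $E_{i_n+a_{i_n,k}\cdot(k^{\text{th}}\text{row})}$ over all $k\ne i_n$ (the order is immaterial since each only adds into row $i_n$, and row $i_n$'s entries outside column $i_n$ start at $0_R$ while the entry in column $i_n$ is $1_R$ and is never used as a source), and verify that the final $(i_n,j)$ entry is precisely $a_{i_n,j}+\sum_{k\ne i_n,j}a_{i_n,k}a_{k,j}=a_{i_n,j}$ by (6.7), while the $(i_n,i_n)$ entry stays $1_R$ since each $a_{i_n,k}a_{k,i_n}$ is a cycle track dominated by $1_R$.

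For the $\mathcal{E}_R$ refinement, I would observe that every elementary matrix used is over $R$ (type 1, 2, or type 3 with tangible multiplier, since $a_{i_n,k}\in T$ after normalization, or can be split into two tangible applications per the Definition 2.5 convention), so the only thing to check is that the final matrix produced equals $A$ \emph{over} $R$, not merely $\cong_\nu$. The produced $(i_n,j)$ entry is $a_{i_n,j}+\sum_{k\ne i_n,j}a_{i_n,k}a_{k,j}$; if the inequality in (6.7) is strict for all relevant $k$, this sum equals $a_{i_n,j}$ exactly in $R$, and if equality occurs for some $k$ but $a_{i_n,j}$ is already ghost, then $a_{i_n,j}+a_{i_n,k}a_{k,j}=a_{i_n,j}$ still holds in $R$ because adding something $\nu$‑equal to a ghost leaves it unchanged. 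Hence $A\in\mathcal{E}_R$ in those cases.

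The main obstacle I anticipate is bookkeeping: making rigorous that the column‑recovery step and the row‑recovery sweep genuinely do not interfere with the already‑factored minor $M_{i_n}$ — i.e.\ that no type‑3 operation in the sweep raises a $0_R$ entry ``beyond adjustment'' in the sense of Proposition 3.2. This is exactly where the normal‑form hypothesis and (6.7) must be used carefully: the only entries the sweep writes into are in row $i_n$, whose target entries are all nonzero (the off‑diagonal targets $a_{i_n,j}$ and the diagonal target $1_R$), so no irreparable zero is ever disturbed, provided the factorization of $M_{i_n}$ and the column recovery were performed first. Assembling these observations in the correct order, with the inequality (6.7) invoked precisely at the row‑recovery step, completes the proof.
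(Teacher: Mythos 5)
Your proposal follows exactly the paper's two-step scheme: recover the $i_n$-th column using the $1_R$ at $(i_n,i_n)$ and the zeros in row $i_n$, then recover row $i_n$ by a sweep of $E_{i_n+a_{i_n,k}\cdot(k^{th}\text{row})}$ over $k\ne i_n$, with condition (6.7) ensuring the parasitic summands $a_{i_n,k}a_{k,j}$ are absorbed by $a_{i_n,j}$. Your treatment of the $\mathcal{E}_R$ case (strict, or ghost under equality) is also correct and matches the paper's assertion; the only wrinkle is the parenthetical ``$a_{i_n,j}a_{j,i_n}a_{i_n,j}<a_{i_n,j}$,'' which is not the quantity that actually arises (the $k=j$ term contributes $a_{i_n,j}a_{j,j}=a_{i_n,j}$ to the target slot and $a_{i_n,j}a_{j,i_n}\le 1_R$ to the diagonal), but this does not affect the argument.
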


\begin{proof}
The ${i_n}^{th}$ column may be obtained by applying $E_{i_t+(a_{i_t,i_n})\cdot {i_n}^{th}row},\ \forall t\ne n$. Then, by applying $E_{i_n+(a_{i_n,k})\cdot k^{th}row}\ \forall k\ne i_n$, we obtain $$a_{i_n,j}+\sum_{k\ne i_n,j} a_{i_n,k}a_{k,j}$$ in the $i_n,j$ position, for every $i_n\ne j$, which is $a_{i_n,j}$ if $a_{i_n,j}\geq a_{i_n,k}a_{k,j}\ \forall k$.
\vskip 0.7 truecm

$$\left(
\begin{array}{cccc}
n-1\times n-1 &minor & | & *\\
has\ \ been & obtained &| &\vdots\\
--- &  --- & 0 & *\\
&&&\\
\sum a_{n,k}a_{k,1}& \cdots &\sum a_{n,k}a_{k,n-1} & 0
\end{array}
\right)$$

\begin{center}\tiny{Figure 2. It is easy to obtain the $n^{th}$  column, using the 0 in the $n^{th}$  row. We obtain the $n^{th}$  row, using the $0$'s on the diagonal:  $a_{n,j}+\sum_{k\ne j}a_{n,k}a_{k,j}$}\end{center}

\end{proof}

\vskip 0.25 truecm

\begin{lem}
If $A$ is in normal form, then any $m\times m$-minor $M_{i_{m+1},...,i_n}$ of  $A^{\nabla}$ is in~$\mathcal{E}$, and  any $m\times m$-minor $M'_{i_{m+1},...,i_n}$ of  $A^{\nabla\nabla}$ is in~$\mathcal{E_R}$.
\end{lem}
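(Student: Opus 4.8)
The plan is to prove the statement by induction on $m$, the size of the minor, using Proposition 6.6 as the inductive step. The base case is $m = 1$: a $1 \times 1$ minor of any matrix in normal form is the $1 \times 1$ matrix $(1_R)$, which is trivially factorizable over both $\mathbb{T}$ and $R$ (it is the identity, hence an empty product of elementary matrices). So $M_{i_2,\dots,i_n} \in \mathcal{E}_R \subseteq \mathcal{E}$ for any choice of the retained index, and this handles both $A^{\nabla}$ and $A^{\nabla\nabla}$ simultaneously for $m=1$.

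For the inductive step, I would first observe that any $m \times m$-minor of $A^{\nabla}$ (respectively $A^{\nabla\nabla}$) is itself a matrix in normal form: by Remark 5.6 (and the discussion preceding Theorem 5.3), $A^{\nabla}$ and $A^{\nabla\nabla}$ are in normal form whenever $A$ is, and any principal minor of a matrix in normal form is again in normal form, since deleting rows and their corresponding columns only removes cycle tracks and preserves the $1_R$ diagonal. So an $m \times m$-minor $N$ of $A^{\nabla}$ has an $(m-1)\times(m-1)$-minor $M_{i_m}$ (delete one more index $i_m$) which, by the induction hypothesis applied to $A^{\nabla}$, lies in $\mathcal{E}$. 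To invoke Proposition 6.6 and conclude $N \in \mathcal{E}$, I must verify the entry condition \eqref{eq:...} — wait, I should not reference an undefined label; I mean condition (6.7) — namely that $a'_{i_m, j} \geq a'_{i_m,k} a'_{k,j}$ for the retained indices, where $A^{\nabla} = (a'_{i,j})$. But this is exactly the content of Lemma 5.8 part 2, or more precisely its general version: the computation in the proof of Lemma 5.8(2) shows $b_{i,k}b_{k,j} \leq b_{i,j}$ because every term appearing in the expansion of $b_{i,k}b_{k,j}$ that is not already a summand of $b_{i,j}$ carries an extra cycle-track factor, hence is dominated. The same bracketing argument — isolating cycle tracks as in the proof of Claim 6.1 — shows that for $A^{\nabla} = (\det(A_{j,i}))$ in normal form, $a'_{i,k} a'_{k,j} \leq a'_{i,j}$ for all distinct $i,j,k$, so condition (6.7) holds (with $\geq$), giving $N \in \mathcal{E}$.

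For the $\mathcal{E}_R$ claim about $A^{\nabla\nabla}$: by Claim 6.1, $A^{\nabla\nabla} \cong_\nu A^{\nabla}$ over $R$ (equal over $\mathbb{T}$), so the entries of $A^{\nabla\nabla}$ have the same $\nu$-values as those of $A^{\nabla}$. I would then show that in the products $a''_{i,k}a''_{k,j}$ (where $A^{\nabla\nabla} = (a''_{i,j})$) the inequality against $a''_{i,j}$ is either strict or, in case of $\nu$-equality, the entry $a''_{i,j}$ is ghost — this is what Proposition 6.6 needs for the $\mathcal{E}_R$ conclusion. The key point is that $a''_{i,j} = b_{i,j} + b_{i,k}b_{k,j}$ (from the proof of Lemma 5.8(3)): whenever $b_{i,k}b_{k,j} \cong_\nu b_{i,j}$, both summands are present in $a''_{i,j}$ and their sum is therefore a ghost. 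When $b_{i,k}b_{k,j} < b_{i,j}$ strictly we get strict inequality directly. Combined with the induction hypothesis that the $(m-1)\times(m-1)$-minor of $A^{\nabla\nabla}$ lies in $\mathcal{E}_R$, Proposition 6.6 yields the $m \times m$-minor of $A^{\nabla\nabla}$ in $\mathcal{E}_R$.

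The main obstacle I anticipate is the bookkeeping in the general cycle-track argument: verifying condition (6.7) for $A^{\nabla}$ in the $n \times n$ case requires expanding $a'_{i,k}a'_{k,j} = \det(A_{k,i})\det(A_{j,k})$ as a sum of products of two permutation-minor tracks and showing each such product factors as (a summand of $a'_{i,j} = \det(A_{j,i})$) times (one or more cycle tracks, each $\leq 1_R$). This is a generalization of the three-line computation in the proof of Lemma 5.8(2), and the combinatorics of matching up the disjoint-cycle structures — deciding when $i$, $k$, $j$ lie in a common cycle and reassembling the pieces, exactly as in Cases I and II of the proof of Claim 6.1 — is where the real work lies. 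One must be careful that every leftover factor is genuinely a cycle track (so that Remark 5.6/4.1 applies to bound it by $1_R$) and that in the $\mathcal{E}_R$ case the equality bookkeeping correctly tracks when a ghost is forced. The rest of the argument is a routine induction wrapping around Proposition 6.6.
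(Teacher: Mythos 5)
Your plan is essentially the paper's proof: induct on minor size via Proposition 6.4 (which you mislabel as 6.6), verify the entry condition $a'_{i_m,j}\geq a'_{i_m,k}a'_{k,j}$ for $A^{\nabla}$ by a cycle-track decomposition, and handle $\mathcal{E}_R$ by observing that when equality holds the $(i_m\ k\ j)\cdot Id$ summand forces $a''_{i_m,j}$ to be ghost (note the $\nu$-equivalence $A^{\nabla\nabla}\cong_\nu A^{\nabla}$ is Corollary 6.2, not Claim 6.1). The one part you leave as a sketch is precisely where the paper does its work: because both factors in $a'_{i_m,k}a'_{k,j}=det(A_{k,i_m})det(A_{j,k})$ are adjoint entries, one must track two permutations $\sigma,\tau$ (with $\sigma(k)=i_m$, $\tau(j)=k$) and split by whether $i_m$ and $j$ share a cycle in $\sigma$ or in $\tau$ --- a genuine extension of, not a restatement of, the single-permutation Cases I/II in Claim 6.1.
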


\vskip 0.5 truecm

\begin{proof}
We can obtain any $2\times 2$ minor $M_{i_3,...,i_{n}}$ by embedding the matrices in the algorithm of Example 2.9-part(b) into rows and columns $i_1$ and $i_2$  in $A^{\nabla}$. Inductively, we assume that  any  $m-1\times m-1$-minor $M_{i_{m},...,i_n}$ of  $A^{\nabla}$   is in $\mathcal{E}$, and show that~$M_{i_{m+1},...,i_n}\in\mathcal{E}$.

By Proposition 6.4, in order to recover row $i_m$, we need to verify that  $$a'_{i_m,j}\geq a'_{i_m,k}a'_{k,j}\ \forall i_m\ne j\  \text{and}\ \forall k\ne j, i_m,...,i_n,$$ (for $\mathcal{E}_R$ we need strictly bigger, or equal and ghost) where $a'_{r,s}$ are the entries of $A^{\nabla}$.  

By  Corollary 6.2 the entries of~$A^{\nabla\nabla}$ are~$\nu$-equivalent to the entries of~$A^{\nabla}$.  If there exist~$m,j$ and $k$ such that $a'_{i_m,j}=a'_{i_m,k}a'_{k,j}$, and $a'_{i_m,j}\in T$,  then we cannot obtain $a'_{i_m,j}$ in $R$ by using $a_{i_m,j}+\sum_{k\ne j}a_{i_m,k}a_{k,j}$. However, the $i_m,j$ position in $A^{\nabla\nabla}$ in this case  is~$(a'_{i_m,j})^{\nu}$, since $a'_{i_m,k}a'_{k,j}$ itself is a summand in the $i_m,j$ position in $A^{\nabla\nabla}$ (corresponding to the permutation $(i_m\ k\ j)\cdot Id$), creating a second leading summand.  Hence, using Proposition 6.4, we may use the entries of $A^{\nabla}$, and the conclusion  applies to $A^{\nabla\nabla}$ in~$\mathcal{E}_R$ as well. 

The indices of our minor are $i_1,...,i_m$ and we need to show that

\begin{equation} \label{geq}a'_{i_m,j}=\sum_{\tiny{\begin{array}{cc}\pi\in S_m:\\\pi(j)=i_m\end{array}}}(a_{i_m,\pi(i_m)}\cdots a_{\pi^{-1}(j),j})C_{\pi}\end{equation} dominates the product between  \begin{equation}\label{leq}a'_{i_m,k}=\sum_{\tiny{\begin{array}{cc}\sigma\in S_m:\\\sigma(k)=i_m\end{array}}}(a_{i_m,\sigma(i_m)}\cdots a_{\sigma^{-1}(k),k})C_{\sigma}\end{equation} and \begin{equation}\label{leq1}a'_{k,j}=\sum_{\tiny{\begin{array}{cc}\tau\in S_m:\\\tau(j)=k\end{array}}}(a_{k,\tau(k)}\cdots a_{\tau^{-1}(j),j})C_{\tau},\end{equation} for every $ i_m\ne j \ and\ k\ne j, i_m,...,i_n$, where $C_{\pi}, C_{\sigma}$ and $C_{\tau}$ are the products of the remaining  cycle tracks in~$\pi, \sigma$ and $\tau$ respectively. 

\vskip 0.25 truecm

Indeed, every summand in the product of \eqref{leq} and \eqref{leq1} can be factored into a product of a term from \eqref{geq} and permutation tracks of $A$ which are smaller than $1_R$, causing each summand in the product of \eqref{leq} and \eqref{leq1} to be dominated by some summand of \eqref{geq}, as desired. We prove this property, similarly to Claim 6.1, by looking at the  cycles of $i_m$ and $j$ in the terms of $\sigma$ and $\tau$, for every~$\sigma,\ \tau\in S_m$ such that~$\sigma(k)=i_m,\ \tau(j)=k$. For both permutations we need to distinguish between cases in which $i_m$ and $j$ are in the same  cycle or not. Hence, there are two possible types of summands:

\noindent\underline{Case I- $i_m$ and $j$ share a common  cycle in at least one of the permutations:}

\noindent (Without loss of generality, we may assume that they share a cycle in $\tau$)

\small$$\left[(a_{k,\tau(k)}\cdots a_{\tau^{-1}(i_m),i_m}\underline{a_{i_m,\tau(i_m)}\cdots a_{\tau^{-1}(j),j}})C_{\tau}\right]\left[ (a_{i_m,\sigma(i_m)}\cdots a_{\sigma^{-1}(k),k})C_{\sigma}\right]$$\normalsize  
 \normalsize The underlined sequence, when composed with disjoint Id cycle tracks, is  a summand of \eqref{geq}. The rest of the monomial 
$$(a_{k,\tau(k)}\cdots a_{\tau^{-1}(i_m),i_m})(a_{i_m,\sigma(i_m)}\cdots a_{\sigma^{-1}(k),k})C_{\tau}C_{\sigma}$$
is composed from cycle tracks (if there are repeating indices in $$(a_{k,\tau(k)}\cdots a_{\tau^{-1}(i_m),i_m})(a_{i_m,\sigma(i_m)}\cdots a_{\sigma^{-1}(k),k}),$$ each part itself  being factored into cycle tracks, starting and ending at the points of repetition),  can be viewed as a permutation track of $A$ when composed with the~Id, and therefore is dominated by~$1_R$.
\vskip 0.25 truecm

\noindent \underline{Case II- Where $i_m$ and $j$ do not share a common  cycle, neither in $\sigma$ or in~$\tau$:}
\small$$\left[(\underline{a_{i_m,\sigma(i_m)}\cdots a_{\sigma^{-1}(k),k}})(a_{j,\sigma(j)}\cdots a_{\sigma^{-1}(j),j})C'_{\sigma}\right]\left[(\underline{a_{k,\tau(k)}\cdots a_{\tau^{-1}(j),j}})(a_{i_m,\tau(i_m)}\cdots a_{\tau^{-1}(i_m),i_m})C'_{\tau}\right].$$\normalsize
We compose the two underlined sequences, which, when composed with disjoint Id cycle tracks, would \textit{include}  a summand from \eqref{geq} (once again, if there are repeating indices, then this part itself is being factored into cycle tracks, starting and ending at the points of repetition, and a summand from \eqref{geq}). The rest of the monomial is composed once again from  simple cycle tracks, and therefore is dominated by~$1_R$.

\end{proof}

\vskip 0.25 truecm

\begin{cor}
If $A$ is a not strictly singular matrix over $\mathbb{T}$ (respectively non-singular over~$R$), then $A^{\nabla}$ (respectively  $A^{\nabla\nabla}$) is factorizable. 
\end{cor}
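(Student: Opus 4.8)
The plan is to peel off the invertible part of $A$ and reduce the statement to Lemma~6.5 read in the boundary case $m=n$. Write $A=P\bar{A}$, where $\bar{A}$ is the normal form of $A$ and $P$ is the invertible matrix normalizing a dominant permutation track of $A$ to the diagonal; such a $P$ exists precisely because $A$ is not strictly singular (respectively non-singular over $R$), so that a dominant track is available, and then $\bar{A}$ is itself in normal form. By Remark~2.7(a) both $P$ and $P^{-1}$ are products of elementary matrices of types~1 and~2, so multiplying any matrix by $P^{\pm 1}$ on either side preserves membership in $\mathcal{E}$ and in $\mathcal{E}_R$. Hence it suffices to show $\bar{A}^{\nabla}\in\mathcal{E}$ and $\bar{A}^{\nabla\nabla}\in\mathcal{E}_R$, and then transport the conclusion back to $A$.

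For the transport I would use Lemma~5.7(iii), which gives the exact identity $A^{\nabla}=\bar{A}^{\nabla}P^{-1}$, together with the identity $A^{\nabla\nabla}=P\bar{A}^{\nabla\nabla}$ obtained as in the proof of Corollary~6.2(b) from two applications of Lemma~5.7 (this particular equality is exact over $R$; the $\cong_{\nu}$ appearing in Corollary~6.2 enters only upon further replacing $\bar{A}^{\nabla\nabla}$ by $\bar{A}^{\nabla}$). Thus, once $\bar{A}^{\nabla}$ is exhibited as a product of elementary matrices over $\mathbb{T}$ and $\bar{A}^{\nabla\nabla}$ as a product of elementary matrices over $R$, appending $P^{-1}$ (resp.\ $P$) on the appropriate side does the same for $A^{\nabla}$ (resp.\ $A^{\nabla\nabla}$). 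This also explains why the $R$-version is phrased through $A^{\nabla\nabla}$ and not $A^{\nabla}$: over $R$ the equality $\bar{A}^{\nabla\nabla}=\bar{A}^{\nabla}$ of Corollary~6.2(a) is only $\cong_{\nu}$, and passing to the second adjoint turns exactly those entries of $\bar{A}^{\nabla}$ whose entry conditions hold with tangible equality into ghosts, which is precisely the slack that lets Proposition~6.4 land the matrix in $\mathcal{E}_R$.

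For the core step, apply Lemma~6.5 to the normal-form matrix $\bar{A}$ in the extreme case $m=n$. There the index set $\{i_{m+1},\dots,i_n\}$ is empty, so the ``$n\times n$-minor'' of $\bar{A}^{\nabla}$ is $\bar{A}^{\nabla}$ itself, which the lemma places in $\mathcal{E}$, and likewise the $n\times n$-minor of $\bar{A}^{\nabla\nabla}$ is $\bar{A}^{\nabla\nabla}$ itself, which the lemma places in $\mathcal{E}_R$. (One should confirm that the induction in the proof of Lemma~6.5 really runs up to $m=n$; the normal-form hypotheses on the intermediate minors are part of what that lemma already takes care of --- their diagonals are sums of cycle tracks dominated by the $\mathrm{Id}$-summand $1_R$, and their determinants are $1_R$ by Theorem~5.5(ii).) Combining with the previous paragraph gives $A^{\nabla}=\bar{A}^{\nabla}P^{-1}\in\mathcal{E}$ and $A^{\nabla\nabla}=P\bar{A}^{\nabla\nabla}\in\mathcal{E}_R$, which is the assertion. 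I do not expect a genuine obstacle here: the real content was spent in Claim~6.1, Proposition~6.4 and Lemma~6.5, and the only delicate point is the bookkeeping of how $\nabla$ and $\nabla\nabla$ commute past the invertible factor $P$, which Lemma~5.7 and Corollary~6.2 have already settled.
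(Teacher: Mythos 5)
Your proposal is correct and follows essentially the same route as the paper: apply Lemma~6.5 at $m=n$ to get $\bar{A}^{\nabla}\in\mathcal{E}$ and $\bar{A}^{\nabla\nabla}\in\mathcal{E}_R$, then transport back to $A$ via Lemma~5.7 (and Corollary~6.2) and the fact that the invertible factor $P$ is a product of type~1 and~2 elementaries. Your remark that the identity $A^{\nabla\nabla}=P\bar{A}^{\nabla\nabla}$ is exact while the $\cong_\nu$ enters only in replacing $\bar{A}^{\nabla\nabla}$ by $\bar{A}^{\nabla}$ is a minor sharpening of the paper's phrasing, not a different argument.
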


\begin{proof}
Let $\bar{A}$ be the normal form of $A$. By Lemma 6.5 we may factor any $k\times k$ minor of~$\ \bar{A}^{\nabla}$, including  $\ \bar{A}^{\nabla}$ itself. Using Lemma 5.7 where $A=P\bar{A}$, we get that~$A^{\nabla}=\bar{A}^{\nabla}P^{-1}$ is factorizable over $\mathbb{T}$ as well.
One concludes  immediately that $A^{\nabla\nabla}=P\bar{A}^{\nabla\nabla}\cong_{\nu}P\bar{A}^{\nabla}$ is also factorizable over $\mathbb{T}$ and $R$.
\end{proof}

Noticing that $A^{\nabla}$ rises from supertropical algebraic considerations, we would like to make the connection to the familiar tropical concept of $A^*$. According to Remark 2.7, we can conclude from the LDM factorization of $A^*$ in ~\cite{LDM} that $A^*$ is factorizable.

\begin{lem} If~$A$ is a matrix of order~$n$ in normal form and~$k$ be a natural number such that~$k\geq n-1$, then~$A^{\nabla}=A^{k}$. In particular, $A^k=A^{k+1},\ \forall k\geq n-1$.

\noindent (Equalities are being interpreted as $\cong_{\nu}$ over $R$).
\end{lem}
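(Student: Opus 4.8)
The plan is to prove the two equalities by relating the $\nabla$ operation to matrix powers directly through tracks. Since $A$ is in normal form, $A^{\nabla}=\mathrm{adj}(A)$, so the $(i,j)$ entry of $A^{\nabla}$ is $\sum_{\sigma(j)=i}(a_{i,\sigma(i)}\cdots a_{\sigma^{-1}(j),j})C_{\sigma}$, which by Remark 4.1 is dominated by the single cycle track running from $i$ to $j$ along $\sigma$ (when the other cycles in $C_{\sigma}$ are replaced by Id); thus the $(i,j)$ entry of $A^{\nabla}$ equals $\max$ over all simple paths from $i$ to $j$ of the product of the edge weights, together with the contribution $1_R$ when $i=j$. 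On the other hand, $(A^k)_{i,j}$ is the maximum over all walks of length $k$ from $i$ to $j$ of the product of edge weights. First I would establish the combinatorial identity that for a matrix in normal form this walk-maximum for any $k\ge n-1$ coincides with the simple-path-maximum (for $i\ne j$) and with $1_R$ (for $i=j$), which would immediately give $A^{\nabla}=A^k$ for $k\ge n-1$, and then $A^k=A^{k+1}$ follows.

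The key steps, in order: (1) Unwind $A^{\nabla}=\mathrm{adj}(A)$ using Definition 5.3 and Remark 4.1 to identify $(A^{\nabla})_{i,j}$ for $i\ne j$ with $\max$ over paths $i\to j$ visiting distinct vertices of the weight product, and $(A^{\nabla})_{i,i}=1_R$. (2) Observe $(A^k)_{i,j}=\bigoplus$ over all length-$k$ walks of the product of weights. (3) Show any walk can be shortened to a simple path without decreasing its weight: whenever a walk revisits a vertex it contains a closed sub-walk, which is a product of cycle tracks, hence dominated by $1_R$ by Remark 4.1, so deleting it does not decrease the weight; iterate. This shows $(A^k)_{i,j}\le (A^{\nabla})_{i,j}$ for $k\ge 1$ once we also account for the diagonal (a length-$k$ closed walk at $i$ is again a product of cycle tracks, dominated by $1_R=(A^{\nabla})_{i,i}$, which the Id-walk attains). (4) Conversely, show $(A^{\nabla})_{i,j}\le (A^k)_{i,j}$ for $k\ge n-1$: a simple path from $i$ to $j$ has length at most $n-1$; pad it to length exactly $k$ by inserting Id-loops (weight $1_R$) at the endpoint, which is legal since the diagonal entries are $1_R$. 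Combining (3) and (4) gives $A^{\nabla}\cong_{\nu}A^k$ for all $k\ge n-1$; taking $k$ and $k+1$ yields $A^k=A^{k+1}$. Over $R$ one must be careful that these are $\nu$-equivalences, but that is exactly the convention stated in the lemma, and the ghost bookkeeping is routine given Remark 4.1 and the preceding results (e.g. Corollary 6.2).

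The main obstacle I expect is step (3)–(4) done carefully over the supertropical semiring $R$ rather than just over $\mathbb{T}$: one must check that when a dominant walk is non-simple, the act of excising a closed sub-walk preserves the $\nu$-class rather than merely the underlying value, and that the padding in step (4) does not accidentally change tangibility. The cleanest route is probably to prove the $\mathbb{T}$-statement first as a pure weighted-digraph fact (longest-walk stabilization for graphs whose cycles have non-positive total weight, a classical fact underlying the Kleene star $A^*$), then lift to $R$ by invoking that $(A^{\nabla})_{i,j}$ and $(A^{\nabla\nabla})_{i,j}$ are $\nu$-equivalent (Corollary 6.2) and that, by Remark 4.1, any "extra" closed sub-walk contributes a ghost exactly when it would create a second leading summand — which is precisely what makes the equality an honest $\cong_{\nu}$. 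This also makes transparent the promised link to $A^{*}$: $A^{\nabla}$ for $A$ in $SL_n(\mathbb{T})$ with $1_{\mathbb{T}}$ on the diagonal is nothing but $A^{*}=I\oplus A\oplus\cdots\oplus A^{n-1}$, whose factorizability was recalled from \cite{LDM} just before the lemma.
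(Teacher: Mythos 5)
Your proposal is correct and takes essentially the same approach as the paper: both hinge on Remark~4.1 (cycle tracks in normal form are dominated by $1_R$), identifying $(A^{\nabla})_{i,j}$ with the maximum over simple paths $i\to j$ and $(A^k)_{i,j}$ with the maximum over length-$k$ walks, and reducing walks to simple paths by excising closed sub-walks. The only organizational difference is that the paper first establishes $A^{\nabla}=A^{n-1}$ and then invokes Claim~6.1 (the absorption $AA^{\nabla}=A^{\nabla}$) to extend to all $k\geq n-1$, whereas you handle all $k\geq n-1$ at once by padding simple paths with $\mathrm{Id}$-loops at an endpoint, a self-contained variant of the same idea; also, the $\cong_{\nu}$ bookkeeping you worry about is immediate since $\nu$-equivalence over $R$ is exactly equality of underlying values, so no separate ghost analysis is needed.
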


\begin {proof}
Let $A$ be a matrix of order $n$ in normal form. The diagonal entries of $A^{\nabla}$ and~$A^{n-1}$ are $1_R$. We will show equality for the off-diagonal part.

As we saw before, the off-diagonal $i,j$ position in $A^{\nabla}$ is 
$$\sum [a_{i,\pi(i)}\cdots a_{\pi^{-1}(j),j}]C_{\pi}.$$ Of course each summand is a product of $n-1$ entries of A. According to Remark 4.1, this sum is being dominated by the summands 
$$\sum [a_{i,\pi(i)}\cdots a_{\pi^{-1}(j),j}]C_{Id},$$ where $C_{Id}$ is a product of $Id$ cycle tracks.
\vskip 0.25 truecm

The off-diagonal $i,j$ position in $A^{n-1}$  is 
$$\sum [a_{i,t_1}a_{t_1,t_2}\cdots a_{t_{n-2},j}]$$ (each summand is a product of $n-1$ entries of A). For every repeating index $t_k$ we can start and end a cycle track at the points of repetition, obtaining summands of the form
$$\sum(a_{i,t_1}\cdots a_{t_l-1,t_l})C_1\cdots C_v(a_{t_l,t_{l+1}}\cdots a_{t_{n-2},j})=
\sum (a_{i,t_1}\cdots a_{t_l-1,t_l}a_{t_l,t_{l+1}}\cdots a_{t_{n-2},j})C_1\cdots C_v,$$ where $C_1,...,C_v$ are the cycles obtained where the indices repeat. Also, there are no indices repeating in $\{i,t_1,\cdots ,t_l,\cdots ,t_{n-2},j\}$, which indicates a cycle track missing $a_{j,i}$. These summands are dominated by the summands 
$$\sum (a_{i,t_1}\cdots a_{t_l-1,t_l}a_{t_l,t_{l+1}}\cdots a_{t_{n-2},j})C_{Id}.$$
Therefore, for $A$ in normal form we have $A^{\nabla}=A^{n-1}$.
Next, we saw in Claim 6.1 that~$AA^{\nabla}=A^{\nabla}$ for $A$ in normal form. Therefore $A^n=AA^{n-1}=AA^{\nabla}=A^{\nabla}$. Inductively we get $ A^k=A^{\nabla},\ \forall k\geq n-1$.

\end{proof}

\begin{cla}
$A^{\nabla}=A^*$ when $A$ is in normal form .  (Equality is being interpreted as $\cong_{\nu}$ over $R$).

\end{cla}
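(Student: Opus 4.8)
The plan is to recall the definition of the tropical closure operation: for a matrix $A$ with $1_R$ on the diagonal (so that $A \geq I$ entrywise, in the supertropical sense), $A^* = \sum_{k \geq 0} A^k = I + A + A^2 + \cdots$. First I would observe that since $A$ is in normal form, every power $A^k$ is dominated by $I$ on the diagonal (the diagonal entries are all $1_R$) and by $A$ off the diagonal in the following sense: by Remark 4.1 and the computation in the proof of Lemma 6.7, the off-diagonal $i,j$ entry of $A^k$ is dominated by the off-diagonal $i,j$ entry of $A^{n-1}$. Hence the partial sums $I + A + \cdots + A^m$ stabilize, and in fact $A^* = I + A + A^2 + \cdots + A^{n-1}$, because $A^k \cong_\nu A^{n-1}$ for all $k \geq n-1$ by Lemma 6.7.

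Next I would show that $A^* = A^{n-1}$ when $A$ is in normal form. Since $A$ has $1_R$ on the diagonal, we have $A^m \geq A^{m-1}$ entrywise for every $m$ (multiplying by $A$ can only increase entries, as the diagonal $1_R$ entries reproduce the old matrix as one of the summands): indeed, the $i,j$ entry of $A^m = A \cdot A^{m-1}$ contains $a_{i,i}(A^{m-1})_{i,j} = (A^{m-1})_{i,j}$ as a summand. Therefore the sequence $I \leq A \leq A^2 \leq \cdots$ is nondecreasing entrywise, so $A^* = I + A + \cdots + A^{n-1} + \cdots$ equals its largest stabilized term, which is $A^{n-1}$ (using that $A \geq I$, so $I$ is absorbed, and that powers beyond $n-1$ add nothing new by Lemma 6.7). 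Combining this with the identity $A^{\nabla} = A^{n-1}$ established in Lemma 6.7 gives $A^{\nabla} = A^* = A^{n-1}$, with all equalities interpreted as $\cong_\nu$ over $R$.

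I expect the main subtlety to be bookkeeping the supertropical equalities carefully: over $R$ the relations are $\cong_\nu$ rather than genuine equalities, so I must check that the monotonicity $A^{m} \vDash_{gs}$-dominates or $\nu$-dominates $A^{m-1}$ holds in the right sense, and that when summing $I + A + \cdots$ the ghost layer behaves as expected. Concretely, I need the fact that if $b \cong_\nu c$ then $b + c \cong_\nu b$ (which may push a tangible value to its ghost, but preserves $\nu$-class), so that the stabilized sum $A^*$ is $\nu$-equivalent to $A^{n-1}$. The monotonicity step and the stabilization from Lemma 6.7 are the two pillars; neither is hard, but assembling them while tracking the tangible-versus-ghost distinction is where care is required. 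Everything else is a direct appeal to Lemma 6.7 and Remark 4.1.
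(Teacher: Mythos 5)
Your proposal is correct and follows essentially the same route as the paper's own proof: you observe that since $A$ is in normal form the powers $A^m$ are entrywise nondecreasing in $m$ (the $1_R$ diagonal reproduces $A^{m-1}$ as a summand of $A^m$), so the series defining $A^*$ stabilizes, and then Lemma 6.7 identifies the stable term $A^{n-1}$ with $A^{\nabla}$. The extra care you take with the supertropical $\cong_\nu$ bookkeeping is a harmless elaboration of what the paper leaves implicit.
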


\begin{proof}

Looking at the definition of $A^*$ appears in ~\cite{LDM} one can see that for $A$ in normal form \begin{equation}A^*=\sum_{i\in \mathbb{N}\bigcup{0}} A^i.\end{equation} Moreover, for $A$ in normal form $(A^{k+1})_{i,j}=(A^k)_{i,j}+B,\ \forall k\in \mathbb{N}\bigcup{0}$, for some matrix~$B$ of order $n$. That is, the~$i,j$ position in every power of the matrix $A$ is a summand in the~ $i,j$ position in the subsequent power of $A$, which means each position can only increase comparing to the corresponding position in the former power. 
Therefore, $$A^*=A^{n-1}=A^{\nabla},$$ for $A$ of order $n$ in normal form.

\end{proof}

In fact, the last Claim yields an alternate proof of the factorizability of $A^\nabla$ from the LDM factorization of $A^*$ obtained independently in ~\cite{LDM}.

\vskip 0.95 truecm

\end{document}